\documentclass[a4paper,12pt]{article}
\usepackage{cmap}					
\usepackage[T2A]{fontenc}			
\usepackage[utf8]{inputenc}			
\usepackage[english]{babel}	
\usepackage{indentfirst}            
\usepackage{amsmath,amsfonts,amssymb,amsthm,mathtools} 
\usepackage{icomma} 
\usepackage{mathrsfs} 
\DeclareMathAlphabet{\mathpzc}{OT1}{pzc}{m}{it} 




\usepackage{graphicx}  
\graphicspath{{images/}{images2/}}  
\setlength\fboxsep{3pt} 
\setlength\fboxrule{1pt} 
\usepackage{wrapfig} 
\usepackage{caption}
\captionsetup{figurewithin=section} 
\usepackage{array,tabularx,tabulary,booktabs} 
\usepackage{longtable}  
\usepackage{multirow} 




\usepackage{etoolbox} 

\usepackage{extsizes} 
\usepackage{geometry} 
\geometry{top=25mm}
\geometry{bottom=25mm}
\geometry{left=24mm}
\geometry{right=18mm}
%

\usepackage{setspace} 

\usepackage{lastpage} 

\usepackage{soul} 

\usepackage{multicol} 
\usepackage{csquotes} 
\usepackage{float} 

\usepackage{tikz} 
\usetikzlibrary{arrows}
\usepackage{pgfplots}
\usepackage{pgfplotstable}
\usepackage{comment}

\usepackage{colortbl} 

\usepackage{eucal}
\usepackage[hidelinks]{hyperref}

\newcommand\restr[2]{{
  \left.\kern-\nulldelimiterspace 
  #1 
  \littletaller 
  \right|_{#2} 
  }}
  \newcommand{\littletaller}{\mathchoice{\vphantom{\big|}}{}{}{}}

\renewcommand{\ge}{\geqslant}
\renewcommand{\le}{\leqslant}
\newcommand{\eps}{\varepsilon}
\newcommand{\pf}{\hookrightarrow}

\newcommand{\norm}[1]{\left \lVert#1\right \rVert}

\renewcommand{\phi}{\varphi}

\renewcommand{\C}{\mathbb{C}}
\newcommand{\Z}{\mathbb{Z}}

\newcommand{\X}{\mathscr{X}}

\newcommand{\op}[1]{\operatorname{#1}}
\newcommand{\mc}[1]{\mathcal{#1}}

\usepackage{yhmath}
\newcommand{\cC}{\widehat{\C}}

{
\theoremstyle{definition}
\newtheorem{example}{Example}

\newtheorem{remark}{Remark}

\newtheorem{definition}{Definition}

}
\newtheorem{theorem}{Theorem}
\newtheorem{intro-theorem}{Theorem}
\newtheorem{proposition}{Proposition}
\newtheorem{corollary}{Corollary}
\newtheorem{lemma}{Lemma}

\usepackage{appendix}

\usepackage[
backend=biber,
style=alphabetic,
sorting=ynt
]{biblatex}
\addbibresource{references.bib}


\title{The derivation problem for one type of bimodules over group algebras}
\author{Andronick Arutyunov \and Alexey Naianzin}

\begin{document}

\maketitle

\begin{abstract}
    The derivation problem is a familiar one concerning group algebras, particularly $L_1(G)$ and von Neumann algebras. In this paper, we study the Banach bimodule $\ell_p(G)$, which is generated by the $\ell_p$ norm over a specific class of groups with well-organized conjugacy classes. For this case, we will demonstrate that all $\ell_p(G)$ derivations are inner.
\end{abstract}

\section{Introduction}

This article is devoted to the study of derivations in bimodules over group rings equipped with various types of norms. The research mainly focuses on classes of groups that act in some way nicely on their conjugacy graph. By employing an approach proposed in earlier works, it is demonstrated that for this class of groups all derivations are inner.

Before providing the historical background and formulating the problem, let us introduce some notation.

\begin{definition}[\cite{Dales-en}, Definition 1.8.1]
Let $A$ be an algebra over $\mathbb{C}$, and $M$ be an $A$-bimodule. A \textit{derivation} in the algebra $A$ with values in $M$ is a linear map
\begin{equation}
    d\colon A \to M,
\end{equation}
such that for all $a,\, b \in A$ we have
\begin{equation}
    d(ab) = d(a)b + ad(b).
\end{equation}
\end{definition}

Let us consider an important example of derivations.

\begin{definition}
Given $x \in M$, the \textit{inner derivation} $D_x\colon A \to M$ is a derivation defined by the formula
\begin{equation}
    D_x(a) = xa - ax.
\end{equation}
\end{definition}

In appendix \ref{appendix:example-unbounded}, we provide an example of discontinuous inner derivation in  $l_2(G).$ While existence of unbounded inner derivations may be a known fact, we have not encountered any prior mention of it in the literature.

A classical question known as a derivation problem (also known as a Johnson's problem) is whether all continuous derivations are inner. See \cite{Dales-en,AruMis19-en} for more details. Typically the case of $A = \mathbb{C}[G]$ is considered. In terms of Hochschild cohomology, it means the triviality of the first cohomology group. 

This problem is stated in \cite{Dales-en} as follows: 
Let $G$ be a locally compact group. Is every derivation from $L^1(G)$ to $M(G)$ inner? Here, $M(G)$ denotes the space of complex-valued regular Borel measures on $G$.

In many special cases, B. Johnson provided answers to this problem. He investigated it as a suitable example for the theory of homology in Banach algebras. For instance, in \cite{JOHNSON-2001-en} he showed that for a connected Lie group $G$ all derivations from $L_1\left(G\right)$ to itself have the form:
\begin{equation}
    Da = a\mu - \mu a,
\end{equation}
where $\mu\in M(G)$. In the work \cite{Johnson-1969}, he proved that all derivations of the algebra $\ell_1(G)$ for a discrete group $G$ are inner. The original formulation of the problem was discussed in \cite{Vikto-2008-en} by V. Losert. 

The algebra of outer derivations in group rings without a norm turns out to be nontrivial. For instance any nontrivial central derivation is not inner (See \cite{Aru20-en}, Definition 3). The structure of this algebra was studied in \cite{AruMis19-en, AruMisSht16-en}, and its description in terms of the original group $G$ was provided. These works also developed a technique for describing derivations using characters. 



\subsection*{Main results}

In this article we introduce a class of groups that we will call BC-groups (see Definition \ref{def:bounded-conjugations}), this are groups such that $\sup_{g\in G}\op{diam}(g^{-1}Kg)<\infty$ for any finite $K\subset G$. That is, conjugations don't change the distances between elements very much. 

The main goal of this paper is to prove that for BC-groups $G$ all continuous derivations in $\ell_p(G),\, p\geq 1$ are inner. Here is the precise statement, the \hyperref[th:main]{proof} is given in the subsection \ref{subsec:Case of Multiple Components}.
\setcounter{intro-theorem}{1}

\begin{intro-theorem} Let $G$ be a \hyperref[def:bounded-conjugations]{BC-group} with uniformly bounded finite \hyperref[def:connected-component]{components}. Then every \hyperref[def:G-bounded-derivation]{$G$-bounded} derivation $d\colon \hyperlink{finite-combinations}{f_p(G)}\to \hyperlink{finite-combinations}{\ell_q(G)}$ is inner.
\setcounter{intro-theorem}{0}
\end{intro-theorem}

All necessary notation is introduced below.

\section{Key Definitions}

Hereinafter, we consider only finitely generated groups $G$, i.e., groups which can be presented as  $G = \langle  \X | R \rangle$, where $\X = \{x_1, \dots, x_n\}$ is a finite set of generators, and ${R = \{r_i | i \in I\}}$ is a set of relations.

Firstly, let us introduce some definitions and notations.

\begin{definition}
The \textit{ $\ell_p$ norm} on the group ring $\C[G]$ is a norm of the form
\begin{equation}
    \norm{\sum_{g\in G}\alpha(g)g}_p = \sqrt[p]{\sum_{g\in G}|\alpha(g)|^p}.
\end{equation}

The \textit{supremum} or $\textnormal{sup}$ \textit{norm} is a norm of the form
\begin{equation}
    \norm{\sum_{g\in G}\alpha(g)g}_\textnormal{s} = \sup_{g \in G}{|\alpha(g)|}.
\end{equation}
\end{definition}

We denote the group ring equipped with the $\ell_p$ or $\sup$ norm as $f_p(G)$ or \hypertarget{finite-combinations}{$f_{\infty}(G)$}, respectively. Let $\ell_p(G)$ and $c_0(G)$ be their completions. By $\cC[G]$ denote a completion in case of arbitrary norm.


Note that in both cases of the $\ell_p$ and $\sup$ norms, multiplication by an element of the group $G$ is a continuous operator from $\C[G]$ to $\C[G]$, so it extends to an operator from $\cC[G]$ to $\cC[G]$. Therefore $\cC[G]$ is a bimodule over $\C[G].$ 

Note that a continuous derivation $d\colon\left(\C[G], \norm{\cdot}_1\right) \to \left(\cC[G], \norm{\cdot}_2\right)$ can be extended to a continuous operator $\hat{d}\colon\left(\cC[G], \norm{\cdot}_1\right) \to \left(\cC[G], \norm{\cdot}_2\right)$.

The technique described in \cite{AruMisSht16-en} allows us to assign a character on a groupoid to each derivation and vice versa. Let us recall the main definitions. We will mostly follow the notation suggested in \cite{Aru20-en}.

\begin{definition} The \textit{groupoid of conjugacy action $\Gamma$} is a small category where the objects are the elements of $G$.
The set of morphisms $\operatorname{Hom}(\Gamma)$  consists of all possible pairs of elements $(u, v) \in G \times G$, here $(u, v) \in \operatorname{Hom}_{\Gamma}(v^{-1}u, uv^{-1}).$ 

Let $\phi=\left(u_1, v_1\right)$ and $\psi=\left(u_2, v_2\right)$ be composable, then their composition $\psi \circ \phi$ is defined by the formula
$$
\psi \circ \phi:=\left(v_2 u_1, v_2 v_1\right).
$$
\end{definition}


\begin{definition}
A map $\chi\colon \op{Hom}(\Gamma) \to \C$ is called \textit{a character} if for any two composable arrows  $\phi=\left(u_1, v_1\right)$, $\psi=\left(u_2, v_2\right)$ we have
$$
\chi(\psi \circ \phi) = \chi(\phi) + \chi(\psi).
$$
\end{definition}

\begin{definition}
\label{def:connected-component}
 By $\Gamma_{[u]}$ denote the connected component of $\Gamma$ containing an element $u\in G.$ In other words, $\Gamma_{[u]}$ is a subgroupoid of $\Gamma$ such that 
 $$\op{Obj}(\Gamma_{[u]}) = \{g \in G \mid \op{Hom}(u,g) \neq \emptyset\},\quad  \op{Hom}(\Gamma_{[u]}) = \{(u,v)\in \op{Hom}(\Gamma) \mid v^{-1}u, uv^{-1} \in \op{Obj}(\Gamma_{[u]}) \}.$$ 
\end{definition}

Let $\delta_h\colon \cC[G]\to \C$ denote a map defined by the formula
$\delta_h\left(\sum_{g \in G}\alpha(g)g\right) = \alpha(h).$

\begin{definition}
    Given a derivation $d\colon\left(\C[G], \norm{\cdot}_1\right) \to \left(\cC[G], \norm{\cdot}_2\right)$,  the \textit{assigned character} 
    is a function $\chi\colon \op{Hom}(\Gamma)\to \C$ defined by the formula 
    \begin{equation}
        \chi(h,g) = \delta_h \left(d(g)\right).
    \end{equation}
\end{definition}

\begin{proposition}
Assigned character is indeed a character and it satisfies the formula
\begin{equation}
    d(g) = \sum_{h\in G} \chi(h,g) h = g\left(\sum_{t \in G} \chi(gt, g) t \right), \quad \forall g \in G.
    \label{eq:d(g)-chi}
\end{equation}
\end{proposition}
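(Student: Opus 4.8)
The plan is to dispatch the two claims in the order that makes the logic cleanest: first the coefficient formula (which is essentially bookkeeping), then the character identity (which carries the real content). For the formula I would start from the definition $\chi(h,g) = \delta_h(d(g))$, which says precisely that $\chi(h,g)$ is the coefficient of $h$ in $d(g)$. Since in each of the norms under consideration the group elements $\{h\}_{h\in G}$ form a Schauder basis of $\cC[G]$ (this holds for $\ell_p(G)$ with $1\le p<\infty$ and for $c_0(G)$), the element $d(g)$ is recovered from its coordinates, so $d(g) = \sum_{h\in G}\chi(h,g)\,h$. The second equality is then just a change of summation index: writing $h = gt$, as $t$ runs over $G$ so does $h$, and hence $g\bigl(\sum_{t\in G}\chi(gt,g)\,t\bigr) = \sum_{t\in G}\chi(gt,g)\,(gt) = \sum_{h\in G}\chi(h,g)\,h$.

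For the character property the key tool is the Leibniz rule. I would take two composable morphisms $\phi=(u_1,v_1)$ and $\psi=(u_2,v_2)$. Composability means the target $u_1 v_1^{-1}$ of $\phi$ equals the source $v_2^{-1}u_2$ of $\psi$, i.e. $u_1 v_1^{-1} = v_2^{-1}u_2$. Their composite is $\psi\circ\phi = (v_2 u_1, v_2 v_1)$, so by definition $\chi(\psi\circ\phi) = \delta_{v_2 u_1}\bigl(d(v_2 v_1)\bigr)$. Applying the Leibniz rule gives $d(v_2 v_1) = d(v_2)v_1 + v_2 d(v_1)$, and the task reduces to extracting the coefficient at $v_2 u_1$ from each of the two summands.

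This coefficient extraction is the main computation. In $d(v_2)v_1 = \sum_{h}\chi(h,v_2)\,h v_1$ the basis element $v_2 u_1$ occurs only for $h = v_2 u_1 v_1^{-1}$; the composability relation rewrites this as $h = v_2(v_2^{-1}u_2) = u_2$, so this summand contributes $\chi(u_2,v_2) = \chi(\psi)$. In $v_2 d(v_1) = \sum_{t}\chi(t,v_1)\,v_2 t$ the element $v_2 u_1$ occurs only for $t = u_1$, contributing $\chi(u_1,v_1) = \chi(\phi)$. Summing the two contributions yields $\chi(\psi\circ\phi) = \chi(\phi) + \chi(\psi)$, which is exactly the character condition.

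The step I expect to be the main obstacle is precisely this bookkeeping: recognizing that the composability condition $u_1 v_1^{-1} = v_2^{-1}u_2$ is exactly what collapses $v_2 u_1 v_1^{-1}$ to $u_2$, so that the right-multiplication term reproduces $\chi(\psi)$ while the left-multiplication term reproduces $\chi(\phi)$. Everything else is routine manipulation of coordinates. One should also note, for rigor, that working with the possibly infinite coordinate expansions of $d(v_1)$ and $d(v_2)$ is legitimate because the coordinate functionals $\delta_h$ are continuous and left and right multiplication by group elements extend to continuous operators on $\cC[G]$, as recorded in the remark preceding these definitions.
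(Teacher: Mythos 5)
Your proposal is correct and follows essentially the same route as the paper: both parts match the paper's argument, which expands $d(g)$ in coordinates via $\delta_h$, reindexes by $h=gt$, and then verifies the character identity by applying the Leibniz rule to $d(g_2g_1)$ and extracting the coefficient at $g_2h_1$ from each summand, using the composability relation $h_1g_1^{-1}=g_2^{-1}h_2$ exactly as you do. Your added remark on Schauder bases and continuity of the coordinate functionals is a small extra rigor point the paper leaves implicit, but the substance is identical.
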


\begin{proof}
It is easy to see that
\begin{equation}
    d(g) = \sum_{h\in G} \delta_h \left(d(g)\right) h = \sum_{h\in G} \chi(h,g) h.
\end{equation}
The equality
\begin{equation}
    \sum_{h\in G} \chi(h,g) h = g\left(\sum_{t \in G} \chi(gt, g) t \right)
\end{equation}
is obtained by changing the summation indices $h = gt.$ 
Verification that $\chi$ is a character is conducted similarly to the proof of Theorem 1 in \cite{AruMisSht16-en}. For the sake of clarity, it will be repeated here.

If the composition $(h_2,g_2) \circ (h_1, g_1)$ is defined, then
\begin{align}
    &h_1 g_1^{-1} = g_2^{-1} h_2, \label{eq:compose_morphisms}
    \\
    &(h_2,g_2) \circ (h_1, g_1) = (g_2 h_1, g_2 g_1).
\end{align}
Taking this into account, we have
\begin{align}
    &\chi(g_2 h_1, g_2 g_1) = \delta_{g_2 h_1} \left(d(g_2 g_1)\right) = \delta_{g_2 h_1} \left(d(g_2) g_1\right) + \delta_{g_2 h_1} \left(g_2 d(g_1)\right) = \nonumber
    \\
    &= \delta_{g_2 h_1 g_1^{-1}} \left(d(g_2)\right) + \delta_{ h_1} \left( d(g_1)\right) \stackrel{(\ref{eq:compose_morphisms})}{=} \delta_{h_2} \left(d(g_2)\right) + \delta_{ h_1} \left( d(g_1)\right) = \chi(h_2,g_2) + \chi(h_1,g_1).
\end{align}
This implies that $\chi$ is a character and completes the proof.
\end{proof}


Recall that a derivation is called quasi-inner if the character assigned to this derivation equals zero on all loops. A morphism $(u,v)$ is called a loop if $uv = vu.$ One can see that every inner derivation is quasi-inner. 

Let a norm on the group algebra be subordinate to the $\op{sup}$ norm, then all continuous derivations are quasi-inner. The proof is given in \cite{Aru-comb-view-en}.

\begin{definition}
A \textit{potential} of a character $\xi$ is a function $\phi \colon \op{Obj}(G) \to \C$ such that 
\begin{equation}
    \chi(h,g) = \phi(hg^{-1}) - \phi(g^{-1}h). \label{eq:character-potential}
\end{equation}
\end{definition}
Conversely, each potential induces a character defined by the formula above. 

One can show that a potential of a character $\chi$ exists iff $\chi$ is quasi-inner. Let $\phi$, $\phi'$ be two potentials, if $\psi = \phi - \phi'$ is constant on connected components function, then $\phi$ and $\phi'$ induce the same character.

Rewriting formula (\ref{eq:d(g)-chi}) in terms of potentials, we get
\begin{equation}
    d(g) = \sum_{h\in G} \left(\phi(hg^{-1}) - \phi(g^{-1}h)\right)h = \sum_{t\in G} \left(\phi(gtg^{-1})  - \phi(t)\right)gt.
    \label{eq:d(g)-phi}
\end{equation}

Consider a formal sum $a = \sum_{t\in G} \phi(t)t,$ then the formula can be rewritten as:
\begin{equation}
d(g) = \sum_{t\in G} \phi(t) \left(tg - gt \right) = [a,g].
\label{eq:formal-commutator}
\end{equation}

Let $D_x\colon \C[G] \to \ell_p(G)$ be an inner derivation, where $x = \sum_{g\in G} \alpha(g)g \in \ell_p(G).$ Then by the last formula we obtain that $\alpha\colon G\to \C$ is the potential of $D_x.$ Conversely, a quasi-inner derivation $d$ is inner if and only if we can find a potential $\phi$ such that $a$ is not only a formal linear combination, but also an element of $\left(\cC[G],\norm{\cdot}\right)$, which is equivalent to $\norm{a} < \infty.$

\begin{definition}
    Let $G$ be a group and $\X$ be a generating set of $G$. The \textit{conjugacy graph} $\op{sk}=\op{sk}(G, \X)$ is an edge-labeled directed graph constructed as follows:
\begin{itemize}
    \item  Each element $g$ of $G$ is assigned a vertex: the vertex set of $\Gamma$ is identified with $G$.
    \item For every $g \in G$ and $x \in \X \cup \X^{-1}$ there is a directed edge with label $x$ from the vertex corresponding to $g$ to the one corresponding to $xg x^{-1}$.
\end{itemize} 
\end{definition}


The distance between vertices is defined as the minimum number of edges in the paths from one vertex to another. Note that all balls in $\op{sk}(G)$ contain a finite number of elements, and distance is infinite if elements lie in distinct connected components. 

\begin{remark}
    The graph $\op{sk}(G,\X)$ can be embedded in the groupoid $\Gamma.$ The set of vertices of $\op{sk}(G)$ and objects of $\Gamma (G)$ just coincide. An edge with label $x$ connecting $g$ and $xgx^{-1}$ maps to the morphism $(xg,x)\in \op{Hom}(g,xgx^{-1}).$
\end{remark}

Let us look at an example of a conjugacy graph. 

\begin{example} Consider the Heisenberg group 
$$H_3(\Z) = \langle A_x, A_p, A_1 \mid [A_p, A_x] = A_1, [A_p, A_1] = E, [A_x, A_1] = E \rangle.$$ 
The graph $\op{sk}(A_p)$ is depicted below. We have $A_x A_p A_1^{k} A_x^{-1} = A_p A_1^{k-1}$, so the edges connecting distinct vertices are labeled by $A_x.$ Edges labeled by $A_p, \, A_1$ are loops. Edges labeled by $A_x^{-1}, \,A_p^{-1},\, A_{1}^{-1}$ are not shown.  To avoid cluttering the notation on the figures in the future, we will refrain from depicting cycles on the graphs. 

 \begin{figure}[H]
	\begin{center}
            \begin{tikzpicture}[font=\small,scale = 1, every node/.style={scale=0.6}]
            \foreach \i in {3,2,...,-3}{
            \filldraw[red] (-\i,0) circle  (0.08);
            \draw[-stealth] (-\i,0) -- (-\i+1,0);
            \draw[-stealth] (-\i,0) .. controls (-\i-0.5,0.7) and (-\i+0.5,0.7) .. (-\i,0);
            \draw[-stealth] (-\i,0) .. controls (-\i-0.8,1) and (-\i+0.8,1) .. (-\i,0);
            \node [below] at (-\i,0){$A_p A_1^{\i}$};
            \node [above] at (-\i+0.5,0){$A_x$};
            \node [above] at (-\i,0.7){$A_p,\,A_1$};
            }
            \end{tikzpicture}
	\caption{Conjugacy graph $\op{sk}_{[A_p]}$}
	\end{center}
	\end{figure}

 \end{example}

\begin{definition}
\label{def-stabilised}
A potential $\phi$ is called \textit{stabilised} to the value $a_0$ at infinity on $\Gamma_{[x_0]}$ if the following condition is satisfied:
\begin{equation}
     \forall \eps > 0\,\, \exists K: \forall g \in \Gamma_{[u_0]} \setminus K \pf |\phi(g) - a_0| < \eps,
\end{equation}
where $K$ is a finite set. This definition only makes sense if the number of vertices in $\Gamma_{[u_0]}$ is infinite.  
\end{definition}


We can adjust the value of a potential by a constant, so in cases where a potential is stabilised, we will consider it to be stabilised to $0.$

In the following sections, it will be shown that for a certain class of groups, potentials of continuous derivations are stabilised. 

The following proposition is obvious:
\begin{proposition}
    Let the elements of $\Gamma_{[u_0]}$ be indexed in some way by $\mathbb{N}$. The potential $\phi$ is stabilised iff the sequence $\{\phi(g_k)\}_{k=1}^\infty$ has a finite limit. 
\end{proposition}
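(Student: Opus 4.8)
The plan is to prove both implications directly from the definitions, relying on two facts: that the indexing $k \mapsto g_k$ is a bijection of $\N$ onto the (infinite) object set of $\Gamma_{[u_0]}$, and that the exceptional set $K$ appearing in Definition \ref{def-stabilised} is finite. The value $a_0$ to which $\phi$ stabilises will turn out to be exactly the limit of the sequence, so the two notions not only coincide but agree on the limiting constant.

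First I would treat the direction ``stabilised $\Rightarrow$ finite limit''. Suppose $\phi$ is stabilised to $a_0$. Fix $\eps > 0$ and take the finite set $K$ provided by the definition, so that $|\phi(g) - a_0| < \eps$ for every $g \in \Gamma_{[u_0]} \setminus K$. Since $K$ is finite and $k \mapsto g_k$ is injective, the set of indices $\{k : g_k \in K\}$ is finite; letting $N$ be one more than its maximum, every $k \ge N$ satisfies $g_k \notin K$, whence $|\phi(g_k) - a_0| < \eps$. As $\eps$ was arbitrary, $\phi(g_k) \to a_0$, a finite limit.

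For the converse ``finite limit $\Rightarrow$ stabilised'', suppose $\phi(g_k) \to L \in \C$. Fix $\eps > 0$ and choose $N$ with $|\phi(g_k) - L| < \eps$ for all $k \ge N$. Put $K = \{g_1, \dots, g_{N-1}\}$, a finite subset of $\Gamma_{[u_0]}$. Because the enumeration is onto, any $g \in \Gamma_{[u_0]} \setminus K$ equals $g_k$ for some $k \ge N$, so $|\phi(g) - L| < \eps$. Thus $\phi$ is stabilised to $L$.

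There is essentially no serious obstacle here; the single point requiring care is the finiteness of $K$, which is precisely what lets the two quantifier structures --- ``for all but finitely many points'' versus ``for all sufficiently large indices'' --- be matched against each other via the bijective enumeration. It is also worth remarking that, since stabilisation is manifestly independent of the chosen indexing, the proposition shows a posteriori that the existence and the value of $\lim_k \phi(g_k)$ do not depend on the enumeration.
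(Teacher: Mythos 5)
Your proof is correct. The paper itself offers no argument at all --- it introduces this statement with ``The following proposition is obvious'' --- and your write-up is exactly the intended elementary argument: translating ``for all $g$ outside a finite set $K$'' into ``for all sufficiently large indices $k$'' via the bijective enumeration, in both directions, with the limit equal to the stabilisation value $a_0$.
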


First, let us show  that a potential of derivation $d\colon \C[G] \to \ell_p(G)$ has no sharp changes in potential from point to point as it tends to infinity. This is formulated more precisely in the following proposition. 

\begin{proposition}
   Let $\phi$ be a potential of derivation ${d\colon \C[G] \to \ell_p(G)}$, continuity of $d$ is not assumed. Then for all $\eps > 0,$ there exists a finite set $K\subset G$ such that for all $g_1,\, g_2 \in  G \setminus K$ with $\rho(g_1,g_2) =1,$ we have $|\phi(g_1) - \phi(g_2)| < \eps.$
    \label{prop:weak_cond_1}
\end{proposition}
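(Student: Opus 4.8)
The plan is to reduce this statement about increments of the potential $\phi$ along edges of the conjugacy graph to a statement about the decay of the coefficients of the finitely many elements $d(x)$, $x\in\X\cup\X^{-1}$. The key observation is an exact identity for the increment along a single edge. By construction of $\op{sk}(G,\X)$, two vertices satisfy $\rho(g_1,g_2)=1$ precisely when $g_2 = xg_1x^{-1}$ for some generator $x\in\X\cup\X^{-1}$, and this edge corresponds to the morphism $(xg_1,x)\in\op{Hom}(g_1,g_2)$. Evaluating the assigned character on this morphism and using the potential formula (\ref{eq:character-potential}), I would obtain
\begin{equation}
\phi(g_2)-\phi(g_1) = \phi(xg_1x^{-1})-\phi(g_1) = \chi(xg_1,x) = \delta_{xg_1}\!\left(d(x)\right).
\end{equation}
Thus the increment of $\phi$ across an edge labeled $x$ is literally one coordinate of the element $d(x)\in\ell_p(G)$.

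With this identity in hand the argument is short. First I would fix $\eps>0$. Since $d$ maps into $\ell_p(G)$, each $d(x)$ for $x\in\X\cup\X^{-1}$ lies in $\ell_p(G)$, so $\sum_{h\in G}|\delta_h(d(x))|^p<\infty$; hence the set $F_x=\{h\in G : |\delta_h(d(x))|\ge\eps\}$ is finite. As $\X\cup\X^{-1}$ is finite, the set
\begin{equation}
K = \bigcup_{x\in\X\cup\X^{-1}} x^{-1}F_x
\end{equation}
is a finite subset of $G$. Finally, I would verify that this $K$ works: given $g_1,g_2\in G\setminus K$ with $\rho(g_1,g_2)=1$, write $g_2=xg_1x^{-1}$. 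Since $g_1\notin K$ we have $g_1\notin x^{-1}F_x$, i.e. $xg_1\notin F_x$, so $|\delta_{xg_1}(d(x))|<\eps$, and the displayed identity gives $|\phi(g_1)-\phi(g_2)|<\eps$. Excluding $g_1$ alone already suffices, so the two-sided hypothesis $g_1,g_2\in G\setminus K$ is more than enough.

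The step I would be most careful about is the purely bookkeeping one: correctly translating $\rho(g_1,g_2)=1$ into the morphism $(xg_1,x)$ and confirming that the increment equals a single honest coefficient $\delta_{xg_1}(d(x))$, rather than a difference of two potential values that fails to collapse. Once that identity is pinned down the proof is essentially immediate, since the only real content is that membership in $\ell_p(G)$ forces the coefficients of each $d(x)$ to decay, so edges carrying a large increment can only be incident to finitely many starting vertices. Note in particular that continuity of $d$ is never used: the argument relies solely on the fact that each individual $d(x)$ belongs to $\ell_p(G)$, which is exactly why the hypothesis of continuity can be dropped here.
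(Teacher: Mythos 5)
Your proof is correct and rests on exactly the same idea as the paper's: the increment of $\phi$ along an edge labeled $x$ equals the coefficient $\delta_{xg_1}(d(x))$ of $d(x)\in\ell_p(G)$, and since there are only finitely many generators, $\ell_p$-decay leaves only finitely many vertices incident to an edge with increment $\ge\eps$. The paper phrases this as a contradiction/pigeonhole argument while you construct the exceptional set $K$ explicitly, but this is the same argument in direct rather than contrapositive form, with your version spelling out the key identity more carefully than the paper does.
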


\begin{proof}
    Otherwise there will be an infinite number of edges within $\op{sk}{G}$ where the potential difference exceeds $\eps.$ 
    So there exists at least one $x\in \X$ such that the set ${\{g \in G : \chi(g,x) > \eps\}}$ is infinite. Consequently, according to the formula $d(g) = \sum_{g \in G} \chi(h,g) g,$ it follows that $d(x_i) \notin \C[G]_{p}.$
\end{proof}

\begin{definition}
\label{def:G-bounded-derivation}
A derivation $d\colon \C[G] \to \cC[G]$ is called \textit{$G$-bounded} if $\sup_{g\in G} \norm{d(g)}$ is finite.  
\end{definition}

It is obvious that continuity implies $G$-boundedness, and in the case of the $\ell_1$ norm the converse is also true (for an operator $A\colon \ell_1 \to X,$ where $X$ is an arbitrary normed space, it holds that $\norm{A} = \sup_{i\in N} \norm{d(e_i)}$). In the general case the converse is false, see appendix \ref{appendix:example-unbounded}.
\section{Bounded Conjugacy Condition}

In contrast to the case of Cayley graphs and the translation action, in the case of the conjugation action we are interested in, the distance between vertices can change uncontrollably.   For example, in a free group $F_2 = \langle a, b | \emptyset \rangle$ we have $\rho(a, bab^{-1}) = 1,$ but $\rho(a^{n}aa^{-n}, a^{n} b a b^{-1} a^{-n}) = n+1,$ i.e. tends to infinity.  For our construction we will introduce a class of groups in which conjugations act in a controllable manner.

\begin{definition}
  \label{def:bounded-conjugations}
A group $G$ is called a group with bounded conjugations (a \textit{BC-group} for short) if the following is satisfied:
    \begin{equation}
         \forall h_1, h_2:   \rho(h_1,h_2) = 1\,\, \exists \, C > 0 : \forall g \in G \, \hookrightarrow \rho(g h_1 g^{-1}, g h_2 g^{-1}) < C.
    \end{equation}
\end{definition}

It is easy to show that $G$ is a BC-group iff for any finite $K\subset G$ holds 
$$\sup_{g\in G}\op{diam}(g^{-1}Kg)<\infty.$$   

\begin{theorem}
The property of a group to be a BC-group is well defined, and invariant under the choice of finite generating set.
\end{theorem}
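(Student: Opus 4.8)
The plan is to show that the BC-property, as stated via the condition on edges ($\rho(h_1,h_2)=1 \Rightarrow \exists C\, \forall g:\, \rho(gh_1g^{-1}, gh_2g^{-1}) < C$), is equivalent to the finite-diameter condition $\sup_{g\in G}\op{diam}(g^{-1}Kg)<\infty$ for all finite $K$, and then to verify that the latter does not depend on the choice of finite generating set. The excerpt already asserts the equivalence of the two formulations, so I would take that as given and focus entirely on generating-set invariance.

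First I would fix the setup: let $\X$ and $\Y$ be two finite generating sets of $G$, and let $\rho_\X$, $\rho_\Y$ denote the conjugacy-graph distances with respect to each. The central observation is that the two metrics are \emph{bi-Lipschitz equivalent} in a suitable sense. Indeed, each generator $x \in \X$ can be written as a finite word $x = y_{i_1}^{\pm 1}\cdots y_{i_m}^{\pm 1}$ in $\Y$, and conjugation by $x$ is then a composition of $m$ single-generator conjugations by letters of $\Y$. Hence a single $\X$-edge in the conjugacy graph corresponds to a path of length at most $M := \max_{x\in\X}(\text{word length of } x \text{ in } \Y)$ in the $\Y$-conjugacy graph, giving $\rho_\Y(u,v) \le M\,\rho_\X(u,v)$ for all $u,v$; by symmetry one gets the reverse inequality with the analogous constant. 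Thus $\rho_\X$ and $\rho_\Y$ are comparable up to a multiplicative constant.

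Next I would transfer the diameter condition across this equivalence. Fix a finite set $K \subset G$. Since $\op{diam}_\Y(S) \le M\,\op{diam}_\X(S)$ for any $S \subset G$ (a $\Y$-geodesic between two points of $S$ is dominated by the image of an $\X$-geodesic), applying this with $S = g^{-1}Kg$ yields
\begin{equation}
    \sup_{g\in G}\op{diam}_\Y(g^{-1}Kg) \le M\,\sup_{g\in G}\op{diam}_\X(g^{-1}Kg).
\end{equation}
Therefore, if the right-hand side is finite for every finite $K$ (i.e. $G$ is a BC-group with respect to $\X$), then the left-hand side is finite for every finite $K$ as well, so $G$ is a BC-group with respect to $\Y$. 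Swapping the roles of $\X$ and $\Y$ gives the converse, establishing independence of the generating set. The statement that the property is ``well defined'' then follows, since the BC-condition can be phrased purely in terms of the existence of the bound without reference to a particular $\X$.

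The main obstacle, and the step demanding the most care, is the bi-Lipschitz comparison of the two conjugacy-graph metrics. One must check that expressing a generator as a word in the other alphabet really does produce a \emph{connected path} in the conjugacy graph: writing $x = y_{i_1}\cdots y_{i_m}$, the intermediate vertices are the partial conjugates $y_{i_j}\cdots y_{i_m}\, g\, (y_{i_j}\cdots y_{i_m})^{-1}$, and one verifies that consecutive such vertices differ by a single $\Y$-edge. This is routine but is the conceptual heart; once the metrics are comparable, the transfer of the finite-diameter condition is purely formal.
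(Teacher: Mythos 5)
Your proposal is correct and takes essentially the same route as the paper: the heart of both arguments is the Lipschitz comparison $\rho_{\mathscr{Y}}(u,v)\le L\,\rho_{\mathscr{X}}(u,v)$, obtained by rewriting each generator of one set as a word in the other and conjugating letter by letter. The only difference is packaging: you transfer the diameter formulation formally (taking the paper's asserted equivalence as given), while the paper verifies the edge condition directly by applying the BC condition to the finite ball $B_{L_y}(h)$ --- which is exactly the finite-set version of the condition that your diameter form encodes.
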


\begin{proof}
Consider two generating sets $\mathscr{X} = \{x_1, \dots, x_n\}$ and $\mathscr{Y} = \{y_1, \dots, y_m\}.$ Suppose the BC condition holds with respect to $\mathscr{X}$, i.e., for any $h \in G,$ there exists a constant $C = C(B_1(h))$ such that for every $g\in G$ and $x_i \in \mathscr{X},$ we have $\rho_\mathscr{X}(gxhx^{-1} g^{-1}, g h g^{-1}) < C.$

Now express all generators $\mathscr{Y}$ through generators $\mathscr{X}$: 
\begin{equation}
    \begin{split}
        &y_1 = w_1(x_1,\dots, x_n);\\
        &\dots\\
        &y_m = w_m(x_1, \dots, x_n).
    \end{split}
\end{equation}
Let $L_y$ be the maximum length of words $w_1,\dots, w_m.$ Similarly, by expressing $x_i$ through $\mathscr{Y},$ we define $L_x.$  Then, for any $h,g \in G, \,y_i\in \mathscr{Y}$ we obtain
\begin{align}
    \rho_\mathscr{Y}(gyhy^{-1} g^{-1}, g h g^{-1}) \le  L_x \rho_\mathscr{X}(gyhy^{-1} g^{-1}, g h g^{-1}) = \nonumber \\ =  L_x \rho_\mathscr{X}(g w_i(x) h w_i(x)^{-1} g^{-1}, g h g^{-1}) \le L_xC\left(B_{L_y}(h)\right),
\end{align}
which completes the proof.
\end{proof}

\subsection{Examples}
Let us now look at some examples of BC-groups.
\begin{example}
Any nilpotent group of rank 2 is a BC-group because for such groups, the graph $\op{sk}_u$ is isomorphic to the Cayley graph $G/Z(u)$ (see \cite{Aru20-en}, lemma 4). 
The group $G/Z(u)$ is abelian, so the BC condition is satisfied with the constant $C = 1.$ 
\end{example}

In particular the Heisenberg group  
$$H_3(\Z) = \left\{\begin{pmatrix} 1 & a & c \\ 0 & 1 & b \\ 0 & 0 & 1  \end{pmatrix}: a,\,b,\,c \in \Z \right\}$$
is a BC-group.

\begin{example} Each
FC-group is a BC-group, since by definition, all groupoid components in an FC-group are finite. It is known that a finitely generated group $G$ is an FC-group iff the derived subgroup $|G'|$ is finite (see \cite{Neumann-1954}, (3.1) Theorem). 
\end{example}

\begin{example}
    Two previous examples can be generalised as follows: Let $G$ be a group such that $|G'/(Z(G)\cap G')|<\infty,$ then $G$ is a BC-group. Let us show it. 
    Let $\{[a_1],\dots, [a_k]\}$ be the elements of $G'/(Z(G)\cap G'),$ let $A = \{a_1,\dots, a_k\}$ be the set of their representatives. 
    For any $h\in \X,$ $g\in G$ we have $gh = z ahg,$ where $a\in A,$ $z\in Z(G).$ Then we obtain 
    $$\rho(gug^{-1}, gxux^{-1}g^{-1}) = \rho(gug^{-1}, axgu(axg)^{-1}) \leq |a|+1 \leq \max_{a'\in A}|a'| + 1.$$

    Here $|a|$ is a length of $a\in G$ with respect to the generating set $\X,$ that is, the minimum number $n$ such that $a$ can be represented as the product $x_{i_1}\dots x_{i_n}.$ 
\end{example}

\begin{example} 
\label{ex:infinite dihedral group}
The infinite dihedral group $D_{\infty} = \Z_2 * \Z_2 = \langle a ,b \mid a^2 , b^2  \rangle$ is a BC-group. Let us describe the conjugacy classes. If a word $w$ starts and ends with the same letter, then it belongs either to $[a]$ or to $[b].$ If a word $w$ starts and ends with different letters, then it has the form $(ab)^n$ or $(ba)^n.$ For words of this form, we have $[(ab)^n] = [(ba)^n] = \{(ab)^n, (ba)^n\}.$ We are interested in the infinite conjugacy classes. For definiteness, let us work with $[a].$

Let's check the BC condition. Consider an element of the form $(ba)^kb.$ The adjacent elements are $a(ba)^kba$ and $a(ba)^{k-1}.$ Let's see how the distance between the elements $h_1 = (ba)^kb$ and $h_2 = a(ba)^kba$ changes when conjugated by $g.$ Write $g$ as a reduced word and conjugate letter by letter. If $a$ is the rightmost letter in $g,$ then $h_1$ will move 1 step to the right (see Figure 2), and $h_2$ will move 1 step to the left. Then, when conjugating by $b,$ the element $ah_1a$ will move right again, and $ah_2a$ will move left, and so on, until one of them reaches the beginning of the ray. If this happens, they will start moving in the same direction. Therefore, $B_1(h_1) = 2 \rho(h_1,a) + 1.$ The additional 1 comes from the fact that when we reach the end of the segment, i.e., $a,$ we will need to conjugate again with $a,$ so the rightward movement will not start at this step.
 \begin{figure}[H]
	\begin{center}
		\begin{tikzpicture} [font=\small]
		\filldraw[red] (0,1) circle  (0.05); 
		\draw (0,1) -- (1,1);
		\filldraw[red] (1,1) circle  (0.05);
		\draw (1,1) -- (2,1);
		\filldraw[red] (2,1) circle  (0.05); 
		\draw (2,1) -- (3,1);
		\filldraw[red] (3,1) circle  (0.05);
		\draw (3,1) -- (4,1);
		\filldraw[red] (4,1) circle  (0.05); 
		\draw (4,1) -- (5,1);
		\filldraw[red] (5,1) circle  (0.05);
		\draw (5,1) -- (6,1);
		\filldraw[red] (6,1) circle  (0.05); 
		\draw (6,1) -- (7,1);
		\filldraw[red] (7,1) circle  (0.05);
		\coordinate[label = below:$\tiny{a}$] (B) at (0,1);
		\coordinate[label = below:$\tiny{bab}$] (B) at (1,1);
		\coordinate[label = below:$\tiny{ababa}$] (B) at (2,1);
		\end{tikzpicture}
	\caption{Conjugacy graph $\op{sk}_{[a]}$}
	\end{center}
	\end{figure}

\end{example}

\begin{proposition}
    a) Let $G$ and $H$ be a BC-groups, then $G\times H$ is also a BC-group.
    
    b) Let $H$ be a normal subgroup of a BC-group $G.$ Then $G/H$ is a BC-group.  
\end{proposition}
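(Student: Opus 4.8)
The plan is to work entirely through the equivalent characterisation recorded just above Definition~\ref{def:bounded-conjugations}: a finitely generated group $G$ is a BC-group iff $\sup_{g\in G}\op{diam}(g^{-1}Kg)<\infty$ for every finite $K\subset G$. Both parts then reduce to controlling how diameters of finite sets behave under the natural projection maps, so the argument is essentially monotonicity of diameter under inclusions and under distance-non-increasing maps.

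For part (a), I would first fix finite generating sets $\X_G,\X_H$ and generate $G\times H$ by $(\X_G\times\{e\})\cup(\{e\}\times\X_H)$; with this choice the word metric splits as a sum, $\rho_{G\times H}((g_1,h_1),(g_2,h_2))=\rho_G(g_1,g_2)+\rho_H(h_1,h_2)$, since each generator alters only one coordinate. Given a finite $K\subseteq G\times H$, its projections $K_G$ and $K_H$ are finite and $K\subseteq K_G\times K_H$. Conjugating coordinatewise gives $(g,h)^{-1}K(g,h)\subseteq (g^{-1}K_Gg)\times(h^{-1}K_Hh)$, and because the diameter of a product set in the sum metric equals the sum of the factor diameters (and a subset has no larger diameter),
$$\op{diam}\big((g,h)^{-1}K(g,h)\big)\le \op{diam}(g^{-1}K_Gg)+\op{diam}(h^{-1}K_Hh).$$
Taking suprema over $(g,h)$ and invoking the BC property of $G$ and of $H$ separately bounds the right-hand side uniformly, which is exactly the characterisation for $G\times H$.

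For part (b), the key observation is that the quotient map $\pi\colon G\to G/H$, with $G/H$ generated by the images of the generators of $G$, is distance non-increasing: a geodesic word for $a^{-1}b$ in $G$ projects to a word of the same length representing $\bar a^{-1}\bar b$, so $\rho_{G/H}(\bar a,\bar b)\le\rho_G(a,b)$. Given a finite $\bar K\subseteq G/H$, I would choose a finite lift $K\subseteq G$ with $\pi(K)=\bar K$; then for each $\bar g$ pick a lift $g$, so that $\bar g^{-1}\bar K\bar g=\pi(g^{-1}Kg)$ and hence $\op{diam}(\bar g^{-1}\bar K\bar g)\le\op{diam}(g^{-1}Kg)$. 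The BC property of $G$ bounds $\op{diam}(g^{-1}Kg)$ uniformly in $g$, hence uniformly in $\bar g$, giving the claim.

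The individual steps are all short; the only points needing genuine care (which I would state as inline verifications or small lemmas) are that the chosen generating sets induce precisely the metric relations used, namely the additivity of the product word metric and the non-expansion of the quotient map. Everything else follows from monotonicity of diameter, so I expect no serious obstacle beyond pinning down these two metric facts correctly.
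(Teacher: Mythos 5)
Your proposal rests on a misreading of which metric the BC condition uses, and in part (b) this becomes a genuine gap. Everywhere in this paper $\rho$ is the metric of the conjugacy graph $\op{sk}(G,\X)$, not the word metric of the Cayley graph: the distance from $h_1$ to $h_2$ is the least number of conjugations by generators carrying $h_1$ to $h_2$, and it is \emph{infinite} when $h_1,h_2$ are not conjugate. The two readings are not interchangeable. Under your word-metric reading, the characterisation ``$\sup_{g\in G}\op{diam}(g^{-1}Kg)<\infty$ for every finite $K$'' forces every conjugacy class to be finite (take $K=\{e,h\}$ and recall that balls in a Cayley graph of a finitely generated group are finite), i.e.\ it would say exactly that $G$ is an FC-group --- collapsing Definition \ref{def:bounded-conjugations} and excluding the paper's main examples such as the Heisenberg group and $D_\infty$. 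Under the correct conjugacy-graph reading, the characterisation is meaningful only for finite $K$ contained in a single connected component (otherwise $\op{diam}(g^{-1}Kg)=\infty$ for every $g$). Your part (a) survives this correction essentially verbatim: with the union generating set, conjugation by $(x,e)$ or $(e,y)$ moves one coordinate along an edge of the corresponding factor graph, so the conjugacy-graph metric on $G\times H$ is again the sum of the factor metrics, the projections of a one-component set lie in single components, and your diameter estimate goes through; this is in essence the paper's own proof, which uses the subadditivity $\rho_{G\times H}(a_1,a_2)\le\rho_G(\pi_G a_1,\pi_G a_2)+\rho_H(\pi_H a_1,\pi_H a_2)$ at the level of pairs.

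Part (b), however, has a gap that persists even after fixing the metric. If $\bar K\subset G/H$ lies in one conjugacy class of $G/H$, an \emph{arbitrary} finite lift $K\subset G$ with $\pi(K)=\bar K$ will in general meet several conjugacy classes of $G$: conjugacy of $\bar u,\bar v$ in $G/H$ only means that \emph{some} lift of $\bar v$ is conjugate to $u$, not that your chosen lifts are conjugate. For such a $K$ one has $\op{diam}(g^{-1}Kg)=\infty$ in the conjugacy-graph metric for every $g$, so the step ``the BC property of $G$ bounds $\op{diam}(g^{-1}Kg)$ uniformly in $g$'' is vacuous and the final inequality proves nothing. The repair is to lift compatibly rather than arbitrarily: fix $\bar u_0\in\bar K$ with lift $u_0$, write each $\bar v\in\bar K$ as $\bar w\bar u_0\bar w^{-1}$, and lift $\bar v$ to $wu_0w^{-1}$; then $K$ lies in a single class of $G$, and your Lipschitz-plus-monotonicity argument works --- noting that $\pi$ is $1$-Lipschitz for conjugacy graphs because an edge of $\op{sk}(G)$ maps to an edge or a loop of $\op{sk}(G/H)$, not because geodesic words for $a^{-1}b$ project. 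This compatible lifting is precisely what the paper's proof does at the level of adjacent pairs: from $\rho([g_1],[g_2])=1$ it writes $[g_2]=[xg_1x^{-1}]$, applies the BC condition in $G$ to the genuinely adjacent pair $g_1,\,xg_1x^{-1}$, and projects back down --- it lifts the conjugator, never the two endpoints independently.
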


\begin{proof}
     a) Suppose $G$ is generated by $x_1,\dots, x_n,$ and the group $H$ is generated by $y_1,\dots, y_n.$ Then, $G\times H$ is generated by the union of the images of these elements under the embeddings. The BC condition  follows immediately from the following relation on metrics:
     $$\rho_{G\times H}(a_1, a_2) \le \rho_G(\pi_G(a_1), \pi_G(a_2)) + \rho_H(\pi_H(a_1), \pi_H(a_2)), \text{ where } a_1,\,a_2\in G\times H.$$
     The sum of the constants could be taken as the boundedness constant.
     
     b) Let $x_i$ be the generators of the group $G$. Take $\left[x_i\right]$ as the generators of $G / H$. From $\rho\left(\left[g_1\right],\left[g_2\right]\right)=1,$ it follows $\left[x g_1 x^{-1}\right]=\left[g_2\right]$ for some $x\in\mathscr{X}$. Since $G$ is a BC-group, we have $\forall g \in G \hookrightarrow \rho\left(g x g_1 x^{-1} g^{-1}, g g_1 g^{-1}\right)<C$. Therefore, $\rho\left([g]\left[g_1\right]\left[g^{-1}\right],[g]\left[g_2\right]\left[g^{-1}\right]\right)<C$.
\end{proof}

Consequently the finite products of the groups mentioned above and finite groups will satisfy the bounded conjugacy condition.

\begin{example}
    Consider $G = D_\infty \rtimes_{\varphi} \Z_2,$ where $\phi\colon \Z_2 \to \op{Aut}(D_\infty)$ such that $\phi(a) = b,\,\phi(b) = a.$ This group can be presented using generators and relations as
    $G = \langle a, b, c \mid a^2 = b^2 = c^2 = e, cac = b \rangle.$

    Any word in this group can be represented as $u = w(a,b)c^\eps,$ where $\eps = 0,1.$
    If $\eps = 0,$ then $u$ belongs either to the conjugacy class $[a] = [b]$ or to a finite conjugacy class of the form $[(ab)^n].$

    In the case of $\eps = 1,$ the infinite class is $[c] = \{(ab)^n c: n\in \Z\}.$ Finite classes have the form $[(ab)^nac] = \{(ab)^nac, (ba)^n bc \}.$

    Analogously to Example \ref{ex:infinite dihedral group}, it can be shown that the BC condition is satisfied for infinite conjugacy classes. These ones are depicted in the figures below.
\end{example}

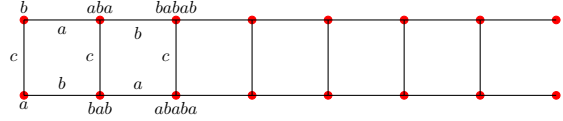
\begin{figure}[H]
	\begin{center}
		\begin{tikzpicture} [font=\small,scale = 1, every node/.style={scale=0.6}]
		\filldraw[red] (0,1) circle  (0.05); 
		\filldraw[red] (1,1) circle  (0.05);
		\filldraw[red] (2,1) circle  (0.05); 
		\filldraw[red] (3,1) circle  (0.05);
		\filldraw[red] (4,1) circle  (0.05); 
		\filldraw[red] (5,1) circle  (0.05);
		\filldraw[red] (6,1) circle  (0.05); 
		\filldraw[red] (7,1) circle  (0.05);
            \filldraw[red] (0,2) circle (0.05);
            \filldraw[red] (1,2) circle  (0.05);
		\filldraw[red] (2,2) circle  (0.05); 
		\filldraw[red] (3,2) circle  (0.05);
		\filldraw[red] (4,2) circle  (0.05); 
		\filldraw[red] (5,2) circle  (0.05);
		\filldraw[red] (6,2) circle  (0.05); 
		\filldraw[red] (7,2) circle  (0.05);
            \draw (0,1) -- (1,1);
            \draw (1,1) -- (2,1);
            \draw (2,1) -- (3,1);
            \draw (3,1) -- (4,1);
            \draw (4,1) -- (5,1);
            \draw (5,1) -- (6,1);
            \draw (6,1) -- (7,1);
            \draw (0,2) -- (1,2);
            \draw (1,2) -- (2,2);
            \draw (2,2) -- (3,2);
            \draw (3,2) -- (4,2);
            \draw (4,2) -- (5,2);
            \draw (5,2) -- (6,2);
            \draw (6,2) -- (7,2);
            \draw (0,1) -- (0,2);
            \draw (1,1) -- (1,2);
            \draw (2,1) -- (2,2);
            \draw (3,1) -- (3,2);
            \draw (4,1) -- (4,2);
            \draw (5,1) -- (5,2);
            \draw (6,1) -- (6,2);
		\coordinate[label = below:$\tiny{a}$] (B) at (0,1);
		\coordinate[label = below:$\tiny{bab}$] (B) at (1,1);
            \coordinate[label = below:$\tiny{ababa}$] (B) at (2,1);
            
            \coordinate[label = above:$\tiny{b}$] (B) at (0,2);
		\coordinate[label = above:$\tiny{aba}$] (B) at (1,2);
            \coordinate[label = above:$\tiny{babab}$] (B) at (2,2);

            \coordinate[label = left:$\tiny{c}$] (B) at (0,1.5);
            \coordinate[label = left:$\tiny{c}$] (B) at (1,1.5);
            \coordinate[label = left:$\tiny{c}$] (B) at (2,1.5);
		\coordinate[label = below:$\tiny{a}$] (B) at (0.5,2);
            \coordinate[label = above:$\tiny{a}$] (B) at (1.5,1);
            \coordinate[label = below:$\tiny{b}$] (B) at (1.5,2);
            \coordinate[label = above:$\tiny{b}$] (B) at (0.5,1);

		\end{tikzpicture}
	\caption{ $\op{sk_a}( D_\infty \rtimes_{\varphi} \Z_2).$}
	\end{center}
	\end{figure}

  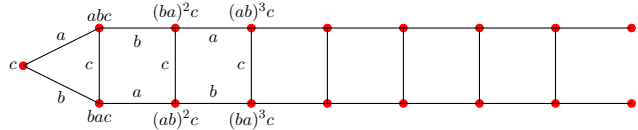
\begin{figure}[H]
	\begin{center}
		\begin{tikzpicture} [font=\small,scale = 1, every node/.style={scale=0.6}]
            \filldraw[red] (-1,1.5) circle  (0.05);
		\filldraw[red] (0,1) circle  (0.05); 
		\filldraw[red] (1,1) circle  (0.05);
		\filldraw[red] (2,1) circle  (0.05); 
		\filldraw[red] (3,1) circle  (0.05);
		\filldraw[red] (4,1) circle  (0.05); 
		\filldraw[red] (5,1) circle  (0.05);
		\filldraw[red] (6,1) circle  (0.05); 
		\filldraw[red] (7,1) circle  (0.05);
            \filldraw[red] (0,2) circle (0.05);
            \filldraw[red] (1,2) circle  (0.05);
		\filldraw[red] (2,2) circle  (0.05); 
		\filldraw[red] (3,2) circle  (0.05);
		\filldraw[red] (4,2) circle  (0.05); 
		\filldraw[red] (5,2) circle  (0.05);
		\filldraw[red] (6,2) circle  (0.05); 
		\filldraw[red] (7,2) circle  (0.05);
            \draw (-1,1.5) -- (0,1);
            \draw (-1, 1.5) -- (0,2);
            \draw (0,1) -- (1,1);
            \draw (1,1) -- (2,1);
            \draw (2,1) -- (3,1);
            \draw (3,1) -- (4,1);
            \draw (4,1) -- (5,1);
            \draw (5,1) -- (6,1);
            \draw (6,1) -- (7,1);
            \draw (0,2) -- (1,2);
            \draw (1,2) -- (2,2);
            \draw (2,2) -- (3,2);
            \draw (3,2) -- (4,2);
            \draw (4,2) -- (5,2);
            \draw (5,2) -- (6,2);
            \draw (6,2) -- (7,2);
            \draw (0,1) -- (0,2);
            \draw (1,1) -- (1,2);
            \draw (2,1) -- (2,2);
            \draw (3,1) -- (3,2);
            \draw (4,1) -- (4,2);
            \draw (5,1) -- (5,2);
            \draw (6,1) -- (6,2);
            \coordinate[label = left: $\tiny{c}$] (B) at (-1,1.5);
		\coordinate[label = below:$\tiny{bac}$] (B) at (0,1);
		\coordinate[label = below:$\tiny{(ab)^2c}$] (B) at (1,1);
            \coordinate[label = below:$\tiny{(ba)^3c}$] (B) at (2,1);
            
            \coordinate[label = above:$\tiny{abc}$] (B) at (0,2);
		\coordinate[label = above:$\tiny{(ba)^2c}$] (B) at (1,2);
            \coordinate[label = above:$\tiny{(ab)^3c}$] (B) at (2,2);

            \coordinate[label = above:$\tiny{a}$] (B) at (-0.5,1.75);
            \coordinate[label = below:$\tiny{b}$] (B) at (-0.5,1.25);
            \coordinate[label = left:$\tiny{c}$] (B) at (0,1.5);
            \coordinate[label = left:$\tiny{c}$] (B) at (1,1.5);
            \coordinate[label = left:$\tiny{c}$] (B) at (2,1.5);
		\coordinate[label = below:$\tiny{b}$] (B) at (0.5,2);
            \coordinate[label = above:$\tiny{b}$] (B) at (1.5,1);
            \coordinate[label = below:$\tiny{a}$] (B) at (1.5,2);
            \coordinate[label = above:$\tiny{a}$] (B) at (0.5,1);

		\end{tikzpicture}
	\caption{$\op{sk}_c( D_\infty \rtimes_{\varphi} \Z_2.)$}
	\end{center}
	\end{figure}

 \begin{example}[Not a BC-group]
Consider $G = H_3 \rtimes_{\varphi} \Z_2$, where $\phi\colon \Z_2 \to \op{Aut}(H_3)$ is defined by $\phi(A_p) = A_x,\,\phi(A_x) = A_p,\, \phi(A_1) = A_1^{-1}.$ This indeed defines a homomorphism, as the relations go to relations. Indeed, $[\phi(A_p), \phi(A_x)] = [A_x, A_p] = [A_p, A_x]^{-1} = A_1^{-1} = \phi(A_1),$ and the remaining two are obvious.

 Now notice that $cA_xc = A_p,$ but $\rho(A_x^k A_p A_x^{-k}, A_p)\to \infty,$ whereas  $\rho(A_x^k A_x A_x^{-k}, A_x) = 0.$ So BC condition isn't satisfied.

 \begin{figure}[H]
	\begin{center}
		\begin{tikzpicture} [font=\small,scale = 1, every node/.style={scale=0.6}]
            \foreach \i in {-4,-3,...,4}{
            \filldraw[red] (\i,1) circle  (0.05);
            \filldraw[red] (\i,2) circle  (0.05);
            \draw (\i,1) -- (\i+1,1);
            \draw (\i,2) -- (\i+1, 2);
            \draw (\i,1) -- (\i,2);
            }
            \draw (-5,1) -- (-4,1);
            \draw (-5,2) -- (-4,2);
		\coordinate[label = below:$\tiny{A_p}$] (B) at (0,1);
		\coordinate[label = below:$\tiny{A_p A_1^{-1}}$] (B) at (1,1);
            \coordinate[label = below:$\tiny{A_pA_1^{-2}}$] (B) at (2,1);
            
            \coordinate[label = above:$\tiny{A_x}$] (B) at (0,2);
		\coordinate[label = above:$\tiny{A_x A_1}$] (B) at (1,2);
            \coordinate[label = above:$\tiny{A_xA_1^2}$] (B) at (2,2);

            \coordinate[label = left:$\tiny{c}$] (B) at (0,1.5);
            \coordinate[label = left:$\tiny{c}$] (B) at (1,1.5);
            \coordinate[label = left:$\tiny{c}$] (B) at (2,1.5);
		\coordinate[label = below:$\tiny{A_p}$] (B) at (0.5,2);
            \coordinate[label = below:$\tiny{A_p}$] (B) at (1.5,2);
            \coordinate[label = above:$\tiny{A_x}$] (B) at (1.5,1);
            \coordinate[label = above:$\tiny{A_x}$] (B) at (0.5,1);
            \node [above] at (-1,2) {$A_xA_1^{-1}$};
            \node [below] at (-1,1) {$A_pA_1$};
            \node[right] at (-1,1.5) {$c$};
            
		\end{tikzpicture}
	\caption{$\op{sk}_{A_x}(H_3\rtimes_{\varphi} \Z_2)$}
	\end{center}
	\end{figure}
\end{example}

This example also demonstrates that not all quasi-isometries preserve the BC condition, so the property of a group to be a BC group is not a coarse invariant. Also it shows that on conjugacy classes with two ends the BC-condition can be not satisfied.

\section{Derivations in BC-groups}
In this section we will prove that in BC-groups with uniformly bounded finite conjugacy classes all continuous derivations are inner. First we will show that on each infinite component a potential corresponding to the derivation is stabilised (see Definition \ref{def-stabilised}). Then we will prove the theorem for derivations with support in one infinite component, and finally we will prove the initial statement.

\begin{lemma}
    Let $G = \langle x_1, \dots , x_n | r_i \rangle $ satisfy the BC condition.
    Let $\phi$ be a potential corresponding to $G$-bounded derivation ${d \colon f_p(G) \to \ell_q(G)}$. Then $\phi$ is stabilised. 
    \label{lem:even-out}
\end{lemma}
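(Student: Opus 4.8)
The plan is to recognise the $G$-boundedness of $d$ as the statement that a naturally associated $1$-cocycle is bounded, and then to invoke the classical fact that a bounded cocycle into a uniformly convex space is a coboundary. The BC hypothesis, interestingly, I do not expect to need for this step.

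First I would fix an infinite connected component and observe that it is a single infinite conjugacy class $\mathcal{C} = \{g u_0 g^{-1} : g \in G\}$, on which $G$ acts transitively by conjugation. For $g \in G$ let $U_g$ be the isometry of $\ell_q(\mathcal{C})$ given by the coordinate permutation $(U_g \xi)(t) = \xi(g^{-1} t g)$, so that $U_g U_h = U_{gh}$. Using the expression $d(g) = \sum_{t \in G}(\phi(gtg^{-1}) - \phi(t))\, gt$ from (\ref{eq:d(g)-phi}), one has $\norm{d(g^{-1})}_q^q = \sum_{t \in G} |\phi(g^{-1} t g) - \phi(t)|^q$. Hence the function $b(g)(t) := \phi(t) - \phi(g^{-1} t g)$, restricted to $\mathcal{C}$, lies in $\ell_q(\mathcal{C})$ with $\norm{b(g)}_q \le \norm{d(g^{-1})}_q \le M$, where $M := \sup_{h \in G} \norm{d(h)}_q < \infty$ by $G$-boundedness. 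Since $b(g) = \phi - U_g \phi$ pointwise, a direct computation with $U_g U_h = U_{gh}$ shows that $b$ is a $1$-cocycle, that is $b(gh) = b(g) + U_g b(h)$.

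Next I would pass from bounded to trivial. The affine isometric action $\alpha_g(\xi) = U_g \xi + b(g)$ has orbit $\{\alpha_g(0)\} = \{b(g) : g \in G\}$, which is bounded and, by the cocycle identity, invariant under every $\alpha_g$. In a uniformly convex Banach space a bounded set has a unique Chebyshev (circum)center, and this center is fixed by every affine isometry preserving the set; applying this to the closed convex hull of the orbit produces $\xi_0$ with $\alpha_g(\xi_0) = \xi_0$, i.e. $b(g) = \xi_0 - U_g \xi_0$. For $1 < q < \infty$ the space $\ell_q(\mathcal{C})$ is itself uniformly convex and we obtain $\xi_0 \in \ell_q(\mathcal{C})$; for $q = 1$ I would instead regard $b$ as a cocycle into the Hilbert space $\ell_2(\mathcal{C})$, which is legitimate because $\norm{b(g)}_2 \le \norm{b(g)}_1 \le M$, and obtain $\xi_0 \in \ell_2(\mathcal{C})$. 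Combining $b(g) = \phi - U_g \phi$ with $b(g) = \xi_0 - U_g \xi_0$ gives $U_g(\phi - \xi_0) = \phi - \xi_0$ for all $g$; transitivity of the conjugation action then forces $\phi - \xi_0$ to be constant on $\mathcal{C}$, say equal to $a_0$. Thus $\phi = a_0 + \xi_0$ with $\xi_0$ vanishing at infinity, so $\phi$ is stabilised to $a_0$ on $\mathcal{C}$, and since this holds for every infinite component, $\phi$ is stabilised.

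The step I expect to be the crux is the passage from a bounded cocycle to a coboundary, which is exactly why the target space matters: the Chebyshev-center argument needs uniform convexity, so the case $q = 1$ must be routed through $\ell_2(\mathcal{C})$ via the norm-decreasing inclusion $\ell_1 \hookrightarrow \ell_2$. I would also verify carefully the cocycle identity and the boundedness estimate $\norm{b(g)}_q \le M$, since these are precisely where the formula for $d(g)$ and the $G$-boundedness hypothesis are consumed. A more hands-on alternative is a discrete co-area argument (distinct limit points of $\phi$ would force a separating level hypersurface with infinitely many edges, hence infinite first-order variation, contradicting $\ell_q$-summability of the edge differences); but this is transparent only for one-ended components and needs the uniform bound to treat two-ended classes, whereas the cocycle route dispatches all cases at once. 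Finally, I would flag that BC is not used here, which suggests it is reserved for the subsequent step, where stabilisation (mere decay to a constant) must be strengthened to genuine $\ell_q$-summability of the potential in order to conclude innerness.
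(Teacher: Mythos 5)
Your proof is correct, but it takes a genuinely different route from the paper's. The paper argues by contradiction with an elementary metric argument: it sets $a$ and $b$ to be the $\limsup$ and $\liminf$ of $\phi$ at infinity along the component, assumes $\delta = a-b>0$, picks an arbitrarily large finite set $V_n$ in the region where $\phi$ is $\delta/4$-close to $a$, and then combines Proposition~\ref{prop:weak_cond_1} (which locates a whole ball $B_C(h)$ inside the region where $\phi$ is $\delta/4$-close to $b$) with the BC condition (which guarantees a single conjugation $g$ carries all of $V_n$ into that ball); formula~\eqref{eq:d(g)-phi} then forces $\norm{d(g)}^q \ge n\left(\delta/2\right)^q$, contradicting $G$-boundedness since $n$ is arbitrary. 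You instead reinterpret $G$-boundedness as boundedness of the cocycle $b(g)=\phi-U_g\phi$ for the coordinate-permutation action on $\ell_q$ of the conjugacy class, and dispatch it with the circumcenter fixed-point theorem; all the steps you flag for verification (identification of the component with the conjugacy class, the bound $\norm{b(g)}_q\le\norm{d(g^{-1})}_q\le\sup_h\norm{d(h)}$, the cocycle identity, the detour through $\ell_2$ when $q=1$) do check out. Your route buys quite a lot: the BC hypothesis is never used, so stabilisation holds for \emph{every} finitely generated group --- and note that your closing guess that BC is ``reserved for the subsequent step'' actually understates this, since the paper's own proof of this lemma consumes BC essentially, and your argument shows it is dispensable here; moreover, for $1<q<\infty$ you obtain strictly more than stabilisation, namely $\phi-a_0\in\ell_q(\mathcal{C})$ on each infinite component with $\norm{\phi-a_0}_q$ bounded by the Chebyshev radius, hence by $\sup_{g}\norm{d(g)}$, which essentially contains Lemma~\ref{lem:potential-in-lq-in-one-component} and the uniform per-component bound that the paper must re-derive later by other means. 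What the paper's approach buys in exchange is elementarity (no uniform convexity, no fixed-point machinery) and uniform treatment of all $q\in[1,\infty)$: at $q=1$ your method only yields $\ell_2$-decay of $\phi-a_0$, which suffices for stabilisation but not for $\ell_1$-innerness, so in that case the later BC-based lemmas of the paper are not subsumed by your argument.
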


\begin{proof}
    Consider an arbitrary infinite connected component $\op{sk}_{g_0}$. Define numbers
    \begin{equation}
        a := \inf_K \left(\sup\limits_{g \in \Gamma [g_0] \setminus K } \phi\left(g\right)\right), \quad 
        b := \sup_K \left(\inf \limits_{g \in \Gamma [g_0] \setminus K } \phi\left(g\right)\right),\quad \delta:= a-b.
    \end{equation}
    

    Clearly, if the supremum or infimum equals infinity, the derivation is not continuous. 
    
    Suppose $\phi$ isn't stabilised; then $\delta$ is positive. Fix an arbitrary natural number $n$. Consider unbounded sets $$V_a 
    = \{g \mid |\phi(g)-a| < \frac{\delta}{4}\},\quad V_b = \{g \mid |\phi(g)-b| < \frac{\delta}{4}\}.$$ 
    
    Take an arbitrary $n$-element subset $V_n \subset V_a.$ Applying the BC condition we find a constant $C$ such that for all $h_1, h_2 \in V_n$ and all $g \in G,$ the inequality $\rho(gh_1 g^{-1}, gh_2 g^{-1}) < C$ is satisfied. 
    According to Proposition \ref{prop:weak_cond_1}, there exists $h \in V_b$ such that $B_C(h) \subset V_b.$ Consider an arbitrary $u \in V_n.$ Since $u,\, h$ belong to the same connected component, we have $h = g u g^{-1}.$ Thus, $g V_n g^{-1} \subset B_C(h) \subset V_b.$
    
    Using the formula \ref{eq:d(g)-phi}, we obtain
    \begin{align}
        \norm{d(g)} = \norm{\sum_{t\in G} \left(\phi(gtg^{-1})  - \phi(t)\right)gt} =   
        \ge \sqrt[q]{\sum_{t\in V_n} \left|\phi(gtg^{-1})  - \phi(t)\right|^{q}} \ge \sqrt[q]{n\frac{\delta}{2}}.
    \end{align}

    Since $n$ was chosen arbitrarily, we conclude that $d$ is unbounded. 
\end{proof}


\subsection{Case of a Single Component}

\begin{lemma}
    Let $G$ be a BC-group. Then any $G$-bounded derivation with support in one infinite component $\Gamma_{[u_0]}$ is inner. 
        \label{lem:potential-in-lq-in-one-component}
\end{lemma}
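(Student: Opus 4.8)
The plan is to invoke the inner-ness criterion recalled above: $d$ is inner exactly when some potential $\phi$ of $d$ satisfies $\norm{a}_q<\infty$, where $a=\sum_{t}\phi(t)t$. So let $\phi$ be a potential of $d$. Since $d$ is supported in the single component $\Gamma_{[u_0]}$, I may take $\phi$ to vanish off $\op{Obj}(\Gamma_{[u_0]})$, and by Lemma~\ref{lem:even-out} its restriction to $\Gamma_{[u_0]}$ is stabilised; after subtracting a constant on that component I assume it is stabilised to $0$. The whole problem thus reduces to establishing the summability $\sum_{t\in\Gamma_{[u_0]}}|\phi(t)|^q<\infty$, since then $\norm{a}_q^q=\sum_{t}|\phi(t)|^q<\infty$ and $d=[a,\cdot\,]$ is inner.

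The engine of the proof is $G$-boundedness. Writing $M:=\sup_{g\in G}\norm{d(g)}<\infty$ and using formula~(\ref{eq:d(g)-phi}), for every $g\in G$ one has
\begin{equation}
\norm{d(g)}_q^q=\sum_{t\in\Gamma_{[u_0]}}\left|\phi(gtg^{-1})-\phi(t)\right|^q\le M^q,
\end{equation}
the sum running only over the component because conjugation preserves components and $\phi$ is supported there. I would then fix an arbitrary finite set $F\subset\Gamma_{[u_0]}$ and prove $\sum_{t\in F}|\phi(t)|^q\le M^q$; letting $F$ exhaust $\Gamma_{[u_0]}$ immediately yields the desired summability with the same bound $M^q$.

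To control the sum over $F$, I would push $F$ out to infinity by a \emph{single} conjugation. Fix $t_0\in F$ and $\eps>0$. By the BC condition applied to the finite set $F$ there is $D=D(F)$ with $\op{diam}(gFg^{-1})\le D$ for all $g$. Using stabilisation, choose a radius $R$ so that $|\phi(v)|<\eps$ whenever $\rho(v,u_0)>R$. Since $\Gamma_{[u_0]}$ is infinite (balls are finite) and $G$ acts transitively on its objects by conjugation, there is $g$ with $\rho(gt_0g^{-1},u_0)>R+D$; then for every $t\in F$ the reverse triangle inequality together with $\rho(gtg^{-1},gt_0g^{-1})\le D$ gives $\rho(gtg^{-1},u_0)>R$, so $|\phi(gtg^{-1})|<\eps$. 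Restricting the displayed estimate to $t\in F$ and applying the reverse triangle inequality once more yields
\begin{equation}
\sum_{t\in F}\big(\max(|\phi(t)|-\eps,\,0)\big)^q\le\sum_{t\in F}\left|\phi(gtg^{-1})-\phi(t)\right|^q\le M^q.
\end{equation}
As $F$ is finite, letting $\eps\to0$ gives $\sum_{t\in F}|\phi(t)|^q\le M^q$, as required.

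The step I expect to be the crux is precisely moving the \emph{entire} finite set $F$ simultaneously into the region where $\phi$ is small: a priori conjugation could scatter $F$ across the whole component, keeping part of it near $u_0$ where $\phi$ need not be small, and then the restriction estimate would be useless. This is exactly what the BC condition rules out, since it forces $\op{diam}(gFg^{-1})$ to stay bounded uniformly in $g$, so that a single conjugation carrying one point of $F$ far out drags all of $F$ with it. In the absence of the BC property (compare the two-ended non-example above) this coherent transport fails, which is the structural reason the hypothesis is needed.
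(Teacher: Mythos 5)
Your proposal is correct and follows essentially the same route as the paper's proof: both use Lemma~\ref{lem:even-out} to stabilise $\phi$ to zero, then exploit the BC condition so that a single conjugation carries a chosen finite set of large-potential points uniformly into the region where $|\phi|$ is small, and read off a lower bound on $\norm{d(g)}$ from formula~(\ref{eq:d(g)-phi}). The only difference is organizational — the paper argues by contradiction with a threshold $x_0$ and $m=\tfrac12\min_i|\phi(g_i)|$, while you argue directly over an arbitrary finite $F$ with $\eps\to 0$, which additionally yields the explicit bound $\norm{\phi}_q\le\sup_{g\in G}\norm{d(g)}$.
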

\begin{proof}
    Due to equation \eqref{eq:formal-commutator}, it is sufficient to prove that a potential from $\ell_q$ can be chosen for the given derivation. Suppose the opposite:
    \begin{equation}
        \sum_{g \in \Gamma [u_0]} |\phi (g)|^q = \infty. 
    \end{equation}

    For each $x_0 > 0$, there exist $g_1, \dots, g_n\in \op{Obj}(\Gamma_{[u_0]})$ such that 
    $${|\phi (g_1)|^q + \dots + |\phi (g_n)|^q > x_0^q},$$
    and each term in the sum is nonzero. 
    Set $m = \frac{1}{2}\min \{|\phi (g_1)|, \dots |\phi(g_n)|\}.$  According to Lemma \ref{lem:even-out}, the potential converges to $0$ at infinity. So we can find $R > 0$ such that $|\phi(g)| < m$ for all $g \in \Gamma [u_0] \setminus B_R(u_0)$.  By the BC condition there exists a constant $C = C(B_R(u_0)) > 0$ such that for all $g \in G$ and all $h_1, h_2 \in B_R(u_0)$, we have $\rho(g h_1 g^{-1}, g h_2 g^{-1}) < C.$
    
    Consider an element $g \in G$ 
    such that $\rho(u_0, g u_0 g^{-1}) > R + C$. Then
    \begin{equation}
    \forall h \in B_R (u_0) \hookrightarrow |\phi(g h g^{-1})| < m,
    \end{equation}
    because $\rho(ghg^{-1}, gu_0g^{-1})<C,$ and therefore $ghg^{-1}\notin B_R(u_0).$ 
    
    From (\ref{eq:d(g)-phi}) we obtain lower bound of $\norm{d(g)}$:
    \begin{align}
        \norm{d(g)} = \norm{\sum_{t\in G} \left(\phi(gtg^{-1})  - \phi(t)\right)gt} = \sqrt[q]{\sum_{t\in G} \left|\phi(gtg^{-1})  - \phi(t)\right|^{q}} \ge \nonumber \\ 
        \ge \sqrt[q]{\sum_{t\in \{g_1,\dots, g_n\}} \left|\phi(gtg^{-1})  - \phi(t)\right|^{q}} = \sqrt[q]{ \sum_{i = 1}^n \left|\phi(g_i) - \phi(g g_i g^{-1})\right|^q} \ge \nonumber \\ \ge \sqrt[q]{\sum_{i = 1}^m \left|\left|\phi(g_i)\right| - m \right|} 
        \ge \frac{1}{2} \sqrt[q]{\sum_{i = 1}^n |\phi(g_i)|^q} = \frac{x_0}{2}. 
    \end{align}
    The first inequality holds because we just omitted all summands with $t\notin \{g_1,\dots,g_n\},$ after that we applied the triangle inequality and took into account that $\phi(g g_i g^{-1}) < m < \min_{i=1,\dots,n}|\phi(g_i)|. $

    Thus, for any chosen $x_0$, we have found an element $g\in G$ for which $\norm{d(g)}\ge \frac{x_0}{2}.$ This implies that $d$ is not $G$-bounded.
\end{proof}

Therefore, for BC-groups, it is shown that a derivation with support in one component can be associated with a potential from $\ell_q.$ 

\subsection{Case of Multiple Components}
\label{subsec:Case of Multiple Components}

Let us examine how the distances $\rho(h,  g h g^{-1})$ and $\rho(h, g^{-1}hg)$ are related. Consider $g = x^k y,\,h = x \in F_2 = \langle x,y \rangle.$ Then
$$\rho(h, ghg^{-1}) = \rho(x, x^{k}yxy^{-1}x^{-k}) = k+1,$$
$$\rho(h, g^{-1}hg) = \rho(x,y^{-1}xy) = 1,$$
so in general case one of the distances can increase largely, whereas other is bounded. But if $G$ is a BC-group, it is impossible.

\begin{proposition}
    Let $G$ be a BC-group, and let $\{a_k\}_{k=1}^\infty$ be a sequence of elements of $G$ such that ${\rho(u, a_k u a_k^{-1}) \to \infty.}$ Then ${\rho(u, a_k^{-1} u a_k) \to \infty.}$ 
    
    \label{prop:inverse-sequence}
\end{proposition}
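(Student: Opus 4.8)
The plan is to argue by contradiction, reducing everything to a single application of the BC condition together with the elementary observation that $a\bigl(a^{-1}ua\bigr)a^{-1}=u$. Suppose the conclusion fails. Since we assume $\rho(u, a_k u a_k^{-1}) \to \infty$ but wish to deny $\rho(u, a_k^{-1} u a_k) \to \infty$, there must be a subsequence $\{a_{k_j}\}$ and a constant $M$ with $\rho\bigl(u, a_{k_j}^{-1} u a_{k_j}\bigr) \le M$ for every $j$. The goal is to show that this forces $\rho\bigl(u, a_{k_j} u a_{k_j}^{-1}\bigr)$ to stay bounded along the subsequence, contradicting the hypothesis $\rho(u, a_k u a_k^{-1}) \to \infty$.

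The core step uses the finite-set formulation of the BC condition. Put $w_j := a_{k_j}^{-1} u a_{k_j}$. By the bound on the subsequence, all $w_j$ lie in the finite ball $K := B_M(u)$, which of course also contains $u$. Since $G$ is a BC-group, for this finite set $K$ there is a constant $C = C(K)$ with $\sup_{g\in G}\op{diam}\bigl(g^{-1}Kg\bigr) \le C$. I would now apply this with the specific choice $g = a_{k_j}^{-1}$, so that $g^{-1}Kg = a_{k_j} K a_{k_j}^{-1}$.

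Both $u$ and $w_j$ belong to $K$, hence their conjugates belong to $a_{k_j} K a_{k_j}^{-1}$: on one hand $a_{k_j} u a_{k_j}^{-1} \in a_{k_j} K a_{k_j}^{-1}$, and on the other hand $a_{k_j} w_j a_{k_j}^{-1} = a_{k_j}\bigl(a_{k_j}^{-1} u a_{k_j}\bigr)a_{k_j}^{-1} = u \in a_{k_j} K a_{k_j}^{-1}$. Since both points lie in the set $a_{k_j} K a_{k_j}^{-1}$, its diameter bound yields
\begin{equation}
    \rho\bigl(u,\, a_{k_j} u a_{k_j}^{-1}\bigr) = \rho\bigl(a_{k_j} w_j a_{k_j}^{-1},\, a_{k_j} u a_{k_j}^{-1}\bigr) \le \op{diam}\bigl(a_{k_j} K a_{k_j}^{-1}\bigr) \le C
\end{equation}
for all $j$. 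This is the desired contradiction, and it completes the proof.

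I do not expect a genuine obstacle here: the whole argument is driven by the identity that unconjugating $w_j$ by $a_{k_j}$ returns $u$, which lets the BC bound on the fixed finite set $K = B_M(u)$ transport the a priori bound on $\rho(u, a_{k_j}^{-1}ua_{k_j})$ into a bound on $\rho(u, a_{k_j}ua_{k_j}^{-1})$. The only points requiring care are purely bookkeeping: choosing the correct side for the conjugating element (applying the diameter bound with $g=a_{k_j}^{-1}$ rather than $g=a_{k_j}$) so that the identity collapses $a_{k_j}w_j a_{k_j}^{-1}$ to $u$, and passing to the subsequence so that all the $w_j$ live in one common finite ball, which is what makes the single constant $C$ available.
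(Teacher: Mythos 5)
Your proof is correct and is essentially identical to the paper's own argument: both extract a bounded subsequence from the negated conclusion, place the conjugates $a_{k_j}^{-1}ua_{k_j}$ in a fixed finite ball $B_M(u)$, and then conjugate back by $a_{k_j}$ using the BC bound on that ball to contradict $\rho(u, a_k u a_k^{-1})\to\infty$. The only difference is that you spell out the bookkeeping (the choice $g=a_{k_j}^{-1}$ and the identity $a_{k_j}w_j a_{k_j}^{-1}=u$) which the paper compresses into the phrase ``conjugating by $a_{n_k}$ and using the boundedness of conjugations.''
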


\begin{proof}
    Otherwise, there exists a constant $L$ and a subsequence $a_{n_k}$ such that $\rho(u, a_{n_k}^{-1} u a_{n_k}) \le L,$ meaning $a_{n_k}^{-1}u a_{n_k}\in B_L(u).$ Conjugating by $a_{n_k}$ and using the boundedness of conjugations, we obtain $\rho(a_{n_k}u a_{n_k}^{-1},u) < C(B_{L}(u))$, which contradicts the hypothesis that ${\rho(u, a_k u a_k^{-1}) \to \infty.}$ 
\end{proof}


\begin{lemma}
    Let $G$ be a BC-group. There exists a sequence $\{a_k\}_{k=1}^{\infty},$  $a_k\in G$ such that for every $h$ from any infinite connected component we have $\rho(h, a_k h a_k^{-1})\to \infty.$ 
    \label{lem:sequence-leading-to-infty}
\end{lemma}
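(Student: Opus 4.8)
The plan is to reduce the assertion to a single representative in each infinite component, build the sequence by a diagonalisation, and use a covering theorem of B.H. Neumann as the engine of the construction.

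First I would reduce to representatives. Since $G$ is finitely generated it is countable, so it has at most countably many infinite components $\Gamma_{[u_1]}, \Gamma_{[u_2]}, \dots$; fix one $u_j$ in each. I claim it suffices to find a single sequence $\{a_k\}$ with $\rho(u_j, a_k u_j a_k^{-1}) \to \infty$ for every $j$. Indeed, let $h$ lie in the $j$-th component, so $d := \rho(u_j, h)$ is finite and fixed. Applying the BC condition in its equivalent form $\sup_{g} \op{diam}(g K g^{-1}) < \infty$ to the finite set $K = \{u_j, h\}$ gives a constant $C$ with $\rho(a_k u_j a_k^{-1}, a_k h a_k^{-1}) < C$ for all $k$. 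The triangle inequality then yields $\rho(h, a_k h a_k^{-1}) \ge \rho(u_j, a_k u_j a_k^{-1}) - d - C \to \infty$. This is the only step where the BC hypothesis is used.

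The core of the proof is the following simultaneous pushing claim: for any finite list $u_1, \dots, u_m$ of elements of infinite components and any $N$, there is one $g \in G$ with $\rho(u_i, g u_i g^{-1}) > N$ for all $i$. To prove it I would examine the bad sets $A_i := \{g \in G : \rho(u_i, g u_i g^{-1}) \le N\}$. Because the conjugacy graph is locally finite, $B_N(u_i)$ meets the class of $u_i$ in finitely many vertices $v$, and for each such $v$ the solution set $\{g : g u_i g^{-1} = v\}$ is a left coset $g_0 Z(u_i)$ of the centraliser; hence $A_i$ is a finite union of cosets of $Z(u_i)$. Since $u_i$ sits in an infinite component, its conjugacy class is infinite and $[G : Z(u_i)] = \infty$. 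Thus $\bigcup_{i=1}^m A_i$ is a finite union of cosets of infinite-index subgroups, and by Neumann's theorem \cite{Neumann-1954} such a family cannot cover $G$; any $g \notin \bigcup_i A_i$ does the job.

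With the claim in hand I assemble the sequence: let $a_k$ be an element furnished by the claim for the first $\min(k, J)$ representatives (where $J \le \infty$ is the number of infinite components) with threshold $N = k$. Then for each fixed $j$ and all $k \ge j$ one has $\rho(u_j, a_k u_j a_k^{-1}) > k$, so this quantity tends to infinity, and the reduction of the first paragraph finishes the proof. I expect the hard part to be the simultaneous pushing claim: pushing the $u_i$ far one at a time and composing conjugators does not work, since composition need not preserve progress already made, and it is precisely the centraliser-coset reformulation together with Neumann's covering theorem that circumvents this.
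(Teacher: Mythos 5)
Your proposal is correct, but it takes a genuinely different route from the paper's own proof. The paper argues by induction on the number of infinite components: given a sequence $a_n$ that already works for the origins $u_1,\dots,u_k$, it either extracts a subsequence that also works for $u_{k+1}$, or else repairs the sequence by composing with conjugators $b_n$ adapted to $\Gamma_{[u_{k+1}]}$, using the BC condition together with a careful subsequence extraction (choosing $a_n'$ with $\min_{i\le k}\rho(u_i, a_n' u_i {a_n'}^{-1}) > 2l(b_n)$) to guarantee that the composition $c_n = a_n' b_n$ does not destroy the progress already made on the first $k$ components --- precisely the obstruction you flagged as the reason naive composition fails; the paper overcomes it with BC rather than avoiding it. Your centraliser-coset reformulation plus B.\,H.~Neumann's covering theorem \cite{Neumann-1954} sidesteps the issue entirely: each bad set $A_i$ is a finite union of left cosets of $Z(u_i)$, these subgroups have infinite index because an infinite component of the conjugacy graph is exactly an infinite conjugacy class, so the union cannot cover $G$, and a single element pushes all representatives past any threshold in one stroke, with no induction and no composition of conjugators. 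What each approach buys: the paper's argument is elementary and self-contained but its inductive step is delicate; yours outsources the combinatorial difficulty to a classical covering theorem and is cleaner and more robust. In fact your argument proves slightly more than you claim: since the core ``simultaneous pushing'' claim needs no BC hypothesis and the union of all infinite components is countable, you could diagonalise over all its elements directly (each has an infinite-index centraliser) instead of only over representatives, which would eliminate the BC condition --- and hence your reduction step --- from the proof altogether and show the lemma holds for every finitely generated group; in the paper, by contrast, BC is woven into both the inductive step and the final transfer from the origins $u_i$ to arbitrary $h$.
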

\begin{proof}
    First, consider the finite number of components. Let us enumerate all infinite connected components and proceed by induction on the number of components. In each component, we arbitrarily choose an origin $u_i.$ 

    The base case is evident; the existence of such a sequence for a single component follows from the fact that we are dealing with infinite components.
    
    Suppose $a_n$ is a sequence such that $\rho(u_i, a_n u_i a_n^{-1}) \to \infty$ for $i \le k.$ Consider the component $\Gamma_{[u_{k+1}]}.$ If we can extract a subsequence from the sequence $\rho(u_{k+1}, a_n u_{k+1} a_n^{-1})$ that converges to infinity, this completes the induction step.
    
    Otherwise, $\{a_n h a_n^{-1}| n\in \mathbb{Z}\}$ will be bounded for each $h\in \Gamma_{[u_{k+1}]}.$ Take an arbitrary sequence $b_n$ such that $\rho(u_{k+1}, b_n u_{k+1} b_n^{-1}) \to \infty.$ Extract a subsequence $a_n'$ from the sequence $a_n$ such that $\min_{i \le k}\rho(u_i, a_n' u_i {a_n'}^{-1}) > 2 l(b_n).$ Consider the sequence $c_n = a_n' b_n.$ Then, for each $i \le k+1,$ we have $\rho(u_i, c_n u_i c_n^{-1}) \to \infty.$ For $i \le k,$ this follows from the triangle inequality. For $i = k+1,$ it follows from the fact that $\{a_n u_{k+1} a_n^{-1}| n\in \mathbb{Z}\}$ is a bounded set, $G$ is a BC-group and   $\rho(u_{k+1}, b_n u_{k+1} b_n^{-1}) \to \infty.$

    In case of an arbitrary number of components, applying the result above, we can choose an element $a_k \in G$ such that $\min_{i \in \mathbb{N}} \rho(u_i, a_k u_i a_k^{-1}) > k.$ 
    Then, for every $i \in \mathbb{N},$ it holds that $\rho(u_i, a_k u_i a_k^{-1}) \to \infty,$ and thus, for every $h \in \Gamma_{[u_i]},$ $\rho(u_i, a_k h a_k^{-1}) \to \infty.$
    
\end{proof}

Let $\Gamma_{\text{inf}}$ and $\Gamma_{\op{f}}$ denote the union of all infinite and all finite components, respectively.

The result of Lemma $\ref{lem:potential-in-lq-in-one-component}$ can be generalised, it is shown in Lemma \ref{lem:potential-in-lq}. 

Define the $\ell_q$ norm of the potential $\phi$ by the formula $$\norm{\phi}_q = \sqrt[q]{\sum_{g\in G} |\phi(g)|^q}.$$ 

\begin{lemma}
    Let the support of the derivation $d$   be contained in $\Gamma_{\op{inf}}$. Suppose a potential $\phi$ of $d$ stabilises to zero. Let $\{a_k\}_{k = 1}^{\infty}$ be a sequence such that for each $u \in \op{\Gamma_{\op{inf}}},$ $\rho(u, a_k u a_k^{-1})\to \infty.$
    Then ${\lim_{k\to \infty}\norm{d(a_k)} = \sqrt[q]{2}\norm{\phi}_q.}$  
    \label{lem:potential-in-lq}
\end{lemma}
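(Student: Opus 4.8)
The plan is to read off the $\ell_q$-coefficients of $d(a_k)$ from formula \eqref{eq:d(g)-phi} and to show that, as $k\to\infty$, the two ``copies'' of the mass of $\phi$ appearing in the commutator become asymptotically disjointly supported, so that the $q$-th power of the norm tends to twice $\norm{\phi}_q^q$. Concretely, \eqref{eq:d(g)-phi} gives
\begin{equation}
\norm{d(a_k)}_q^q = \sum_{t\in G}\left|\phi(a_k t a_k^{-1})-\phi(t)\right|^q .
\end{equation}
Write $u_k(t):=\phi(a_k t a_k^{-1})$ and $v(t):=\phi(t)$, and set $A:=\norm{\phi}_q^q=\sum_{g\in G}|\phi(g)|^q$. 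The substitution $s=a_k t a_k^{-1}$ is a bijection of $G$, so $\sum_{t}|u_k(t)|^q=\sum_{s}|\phi(s)|^q=A$ for every $k$; thus $u_k$ and $v$ carry the same (possibly infinite) $\ell_q$-mass $A$, and the whole point is to prove $\norm{d(a_k)}_q^q=\norm{u_k-v}_q^q\to 2A$.

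The first step is a pointwise decay fact: for each fixed $t\in G$ one has $u_k(t)\to 0$. Indeed, if $t$ lies in an infinite component then $\rho(t,a_k t a_k^{-1})\to\infty$ by hypothesis, so $a_k t a_k^{-1}$ eventually leaves every finite subset of its component; since $\phi$ is stabilised to $0$ (Definition \ref{def-stabilised}) this gives $\phi(a_k t a_k^{-1})\to0$. If instead $t$ lies in a finite component, then $a_k t a_k^{-1}$ stays in that same finite component, on which $\phi\equiv0$ because the support of $d$ is contained in $\Gamma_{\op{inf}}$, so $u_k(t)=0$ outright. Consequently, for any finite set $F\subset G$ we have $\sum_{t\in F}|u_k(t)|^q\to0$, being a finite sum of null sequences.

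The second step is the norm estimate, carried out with the triangle and reverse-triangle inequalities for the $\ell_q$-seminorm restricted to $F$ and to its complement. Fix $\eps>0$ and, assuming first that $A<\infty$, choose a finite $F$ with $\sum_{t\notin F}|v(t)|^q<\eps$. Splitting $G=F\sqcup F^c$ and writing $\norm{\cdot}_{q,E}$ for the restriction of the $\ell_q$-norm to $E$, I use $\norm{u_k}_{q,F}\to0$ (Step 1), $\norm{v}_{q,F}^q=A-\sum_{t\notin F}|v(t)|^q$, $\norm{v}_{q,F^c}^q<\eps$, and the conservation of mass $\norm{u_k}_{q,F^c}^q=A-\norm{u_k}_{q,F}^q\to A$. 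The triangle inequality on each piece yields $\limsup_k\norm{u_k-v}_q^q\le A+(A^{1/q}+\eps^{1/q})^q$, while the reverse triangle inequality yields $\liminf_k\norm{u_k-v}_q^q\ge (A-\eps)+(A^{1/q}-\eps^{1/q})^q$. Letting $\eps\to0$ sandwiches the limit at $2A$, so $\norm{d(a_k)}_q\to(2A)^{1/q}=\sqrt[q]{2}\,\norm{\phi}_q$. For the degenerate case $A=\infty$, the lower bound on the $F$-piece alone, $\norm{u_k-v}_{q,F}^q\ge(\norm{v}_{q,F}-\norm{u_k}_{q,F})^q\to\sum_{t\in F}|\phi(t)|^q$, already forces $\norm{d(a_k)}_q^q\to\infty=\sqrt[q]{2}\cdot\infty$, since $F$ can be chosen to capture arbitrarily large mass of $\phi$.

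I expect the main obstacle to be this second step for general $q\ge1$: unlike the Hilbert case $q=2$ there is no orthogonality identity turning $\norm{u_k-v}_q^q$ into $\norm{u_k}_q^q+\norm{v}_q^q$ minus an overlap, so the asymptotic disjointness of the supports of $u_k$ and $v$ must be exploited quantitatively. The clean device above is to isolate essentially all of $v$'s mass on a finite set $F$ where $u_k$ dies (Step 1), and then to invoke conservation of mass $\norm{u_k}_q^q=A$ to certify that $u_k$'s mass has migrated entirely into $F^c$; the two applications of (reverse) Minkowski then pin the limit without ever estimating the overlap $\sum_t\min(|u_k(t)|,|v(t)|)^q$ directly.
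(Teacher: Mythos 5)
Your proof is correct, and it takes a genuinely different route from the paper's. The paper localizes \emph{both} copies of the mass of $\phi$: it fixes a finite set $\mathcal{S}$ carrying almost all of $\norm{\phi}_q^q$, then invokes the BC condition (to get a uniform constant $C$ bounding $\op{diam}(gB_r(u_i)g^{-1})$) together with Proposition \ref{prop:inverse-sequence} (to also get $\rho(u_i, a_k^{-1}u_ia_k)\to\infty$), so that for large $k$ the values $\phi(a_kta_k^{-1})$ for $t\in\mathcal{S}$ and $\phi(t)$ for $t\in a_k^{-1}\mathcal{S}a_k$ are all small; the lower bound then comes from the two disjoint blocks $\mathcal{S}$ and $a_k^{-1}\mathcal{S}a_k$, and the upper bound from the convexity inequality $(|a|+|b|)^q\le 2^{q-1}(|a|^q+|b|^q)$ on the tail. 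You never track where the conjugated mass goes: you combine the exact conservation of mass $\sum_{t\in G}|\phi(a_kta_k^{-1})|^q=\norm{\phi}_q^q$ (conjugation by $a_k$ is a bijection of $G$) with pointwise decay of $\phi(a_kta_k^{-1})$ on any fixed finite set $F$ (which needs only the hypothesis $\rho(t,a_kta_k^{-1})\to\infty$, stabilisation, and finiteness of balls), and then apply Minkowski and reverse Minkowski separately on $F$ and $F^c$. This buys two things. First, your argument proves the lemma exactly as stated, with no BC hypothesis on $G$ and no appeal to Proposition \ref{prop:inverse-sequence}: the paper's proof silently uses both (harmless in context, since the lemma is only applied to BC-groups, but your version is strictly more general and notably shorter). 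Second, the identity $\norm{u_k}_{q,F^c}^q = A - \norm{u_k}_{q,F}^q$ replaces the paper's approximate bookkeeping with the constants $m$, $r$, $R$, $C$, $N$. Two points you should make explicit but which are not gaps: the hypothesis $\rho(u,a_kua_k^{-1})\to\infty$ is indeed stated for \emph{every} $u\in\Gamma_{\text{inf}}$, not just for chosen origins, which is exactly what your Step 1 uses; and your normalization $\phi\equiv 0$ on finite components is the same implicit convention the paper adopts when it chooses $\mathcal{S}\subset\op{Obj}(\Gamma_{\text{inf}})$ carrying almost all of $\norm{\phi}_q^q$ --- without this convention the statement itself would fail, since a potential that is a nonzero constant on a finite component changes $\norm{\phi}_q$ but not $d$.
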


\begin{proof}

In each infinite connected component choose origin $u_i$. Pick $0<\eps<1.$ 

\textbf{Case 1:} $0<\norm{\phi}_q<\infty$ (the case of zero norm is trivial).  

Choose $x_0 = \norm{\phi}_q - \eps/2$. Then, there 
exists a finite set  $\mc{S} = \{g_1, \dots, g_n\}\subset  \op{Obj}(\Gamma_{\op{inf}}),$ 
such that 
    $$\sum_{g\in \mc{S}}|\phi (g)|^q > x_0^q, \quad
    \sum_{g \notin \mc{S}} |\phi(g)|^q < \min \left( \frac{\eps}{2},\,\, 2^{-q}\eps \left(\norm{\phi}_q\right)^q  \right),$$
   and $\phi(g)\neq 0$ for any $g \in \mc{S}$. We will need the first part of the minimum for the lower bound estimate and the second part for the upper bound estimate. 
    
     Since $ \mathcal{S}$ is a finite set it is contained in a finite union of infinite connected components, that is, $\mc{S}\subset \cup_{i \in I} \Gamma_{[u_i]},\,|I|<\infty.$  So, there exists $r>0$ such that $\{g_1,\dots, g_n\} \subset \cup_{i\in I} B_r(u_i)$.

     By hypothesis, the potential stabilises to zero at infinity on each component. Therefore, for any $m>0$ there exists $R>r>0$ such that for each $g \in \cup_{i \in I}\left(\Gamma [u_i] \setminus B_R(u_i)\right),$ we have $|\phi(g)| < m$. Set $m = \frac{\eps}{2}\min \{|\phi (g_1)|, \dots |\phi(g_n)|\}$. Since $G$ is a BC-group, the numbers $C_i = \sup_{g\in G}{\op{diam}(g B_r(u_i)g^{-1})}$ are finite. So the constant ${C = \max_{i \in I} C(B_r(u_i))}$ is finite and satisfies  
    $$\forall g \in G,\,\, \forall i \in I, \forall h_1, h_2 \in B_r (u_i) \hookrightarrow \rho(g h_1 g^{-1}, g h_2 g^{-1}) < C.$$
    
    By hypothesis and Proposition $\ref{prop:inverse-sequence}$, there exists a number $ N$ such that for each $k \ge N$, for each $i \in I,$ we have $\rho(u_i, a_k u_i a_k^{-1}) > R + C$ and $\rho(u_i, a_k^{-1} u_i a_k) > R + C,$ remembering that $I$ is finite.   Then for all $h \in B_r (u_i),$ $k\ge N$ it follows that
    \begin{equation}
    |\phi(a_k h a_k^{- 1})| < m,\quad |\phi(a_k^{-1} h a_k)| < m.
    \end{equation}


  Note that $\mc{S} \cap \left( a_k^{-1}\mc{S}a_k \right)= \emptyset$ by the triangle inequality.  
    
    Now from (\ref{eq:d(g)-phi}), we obtain lower bound: 
    \begin{align}
        \left(\norm{d(a_k)}\right)^{q} = \norm{\sum_{t\in G} \left(\phi(a_kta_k^{-1})  - \phi(t)\right)a_kt}^{q} = {\sum_{t\in G} \left|\phi(a_kta_k^{-1})  - \phi(t)\right|^{q}} \ge \nonumber \\
        \ge {\sum_{t\in \mc{S}} \left|\phi(a_kta_k^{-1})  - \phi(t)\right|^{q}} +  {\sum_{t\in a_k^{-1}\mc{S}a_k} \left|\phi(a_kta_k^{-1})  - \phi(t)\right|^{q}} = \nonumber \\
        = { \sum_{i = 1}^n \left| \phi(a_k g_i a_k^{-1}) - \phi(g_i) \right|^q}  +  { \sum_{i = 1}^n \left|\phi(g_i) - \phi(a_k^{-1}g_ia_k) \right|^q}\ge \nonumber \\ \ge 2{\sum_{i = 1}^m \left|\left|\phi(g_i)\right| - m \right|^{q}} 
        \ge 2(1-\frac{\eps}{2})^q \sum_{i=1}^n|\phi(g_i)|^q  \ge 2\left(\norm{\phi}_q\right)^q\left(1-\frac{\eps}{2}\right)^q\left(1-\frac{\eps}{2}\right).
    \end{align}
    
    This means that $$\norm{d(a_k)} \ge \sqrt[q]{2}\norm{\phi}_q (1-\frac{\eps}{2})\sqrt[q]{1-\frac{\eps}{2}} \ge (1-\eps) \sqrt[q]{2}\norm{\phi}_q.$$
    
    Now, let us obtain upper bound:
    \begin{align}
        &\left(\norm{d(a_k)}\right)^q = {\sum_{t\in G} \left|\phi(a_kta_k^{-1})  - \phi(t)\right|^{q}} \le \nonumber \\
        &{\sum_{t\in \mc{S}} \left|\phi(a_kta_k^{-1})  - \phi(t)\right|^{q}} 
        + {\sum_{t\in a_k^{-1}\mc{S}a_k} \left|\phi(a_kta_k^{-1})  - \phi(t)\right|^{q}} +  {\sum_{t\notin \mc{S}\cup a_k^{-1}\mc{S}a_k} \left|\phi(a_kta_k^{-1})  - \phi(t)\right|^{q}} \le \nonumber \\
        &2\left(1+\frac{\eps}{2}\right)^q \left(\norm{\phi}_q\right)^q  
        +  {\sum_{t\notin \mc{S}\cup a_k^{-1}\mc{S}a_k} 2^{q-1} \left( \left|\phi(a_kta_k^{-1}) \right|^q+ \left|\phi(t)\right|^{q}\right)}  \le 2(1+\frac{\eps}{2})^q \norm{\phi}_q^q + \frac{\eps}{2}\norm{\phi}_q^q.
    \end{align} 

    To estimate the term ${\sum_{t\notin \mc{S}\cup a_k^{-1}\mc{S}a_k} \left|\phi(a_kta_k^{-1})  - \phi(t)\right|^{q}}$ we used an inequality $$(|a|+|b|)^q \le 2^{q-1}(|a|^q + |b|^q),$$ 
    and after that applied $\sum_{g \notin \mc{S}} |\phi(g)|^q <  2^{-q}\eps \left(\norm{\phi}_q\right)^q.$
    
    Thus, 
    $$\norm{d(a_k)} \le \left(1+\frac{\eps}{2}\right)\sqrt[q]{2}\norm{\phi}\sqrt[q]{1 + \frac{\eps}{4}} \le (1+\eps)\sqrt[q]{2}\norm{\phi}.
    $$
    This gives us the desired result: 
    \begin{equation}
        \lim_{k\to \infty} \norm{d(a_k)} = \sqrt[q]{2}\norm{\phi}_q.
    \end{equation}

\textbf{Case 2:} $\norm{\phi}_q = \infty$.

Choose an arbitrary $x_0 > 0$. Then, there exist $g_1, \dots, g_n\in \cup_{i \in I} \op{Obj}(\Gamma_{[u_i]})$ such that 
    $${|\phi (g_1)|^q + \dots + |\phi (g_n)|^q > x_0^q}.$$
    
Analogously to how we calculate the lower bound, for large enough $k$ we obtain 
$$\left(\norm{d(a_k)}\right)^{q} \ge 2{\sum_{i = 1}^m \left|\left|\phi(g_i)\right| - m \right|^{q}} \ge 2\left(1 - \frac{\eps}{2}\right) x_0.$$

Since $x_0$ could be chosen arbitrarily large, we conclude that sequence converges to infinity. 
\end{proof}


\begin{corollary}
    Let $G$ be a BC-group, and let ${d \colon f_p(G) \to \ell_q(G)}$ be a bounded derivation with support in $\Gamma_{\text{inf}}$. Then $d$ has a potential from $\ell_q.$
    \label{cor:potential-in-lq-infinite-components}
\end{corollary}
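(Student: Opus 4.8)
The plan is to read the corollary off directly from the three preceding lemmas; the only real work is to arrange the hypotheses of Lemma \ref{lem:potential-in-lq} and then use boundedness to control $\norm{\phi}_q$.

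First I would fix a potential $\phi$ of $d$. Since $d$ is bounded it is $G$-bounded, so set $M := \sup_{g\in G}\norm{d(g)} < \infty$. Because the support of $d$ lies in $\Gamma_{\op{inf}}$, the assigned character vanishes on every finite component, so $\phi$ is constant on each finite component; I would normalise $\phi$ to be identically zero on $\Gamma_{\op{f}}$. On the infinite components, Lemma \ref{lem:even-out} guarantees that $\phi$ is stabilised, and as remarked after Definition \ref{def-stabilised} I may take it to stabilise to $0$ on each infinite component. After these normalisations $\phi$ satisfies exactly the hypotheses demanded by Lemma \ref{lem:potential-in-lq}: its support lies in $\Gamma_{\op{inf}}$ and it stabilises to zero.

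Next I would invoke Lemma \ref{lem:sequence-leading-to-infty} to produce a sequence $\{a_k\}_{k=1}^\infty$ in $G$ with $\rho(u, a_k u a_k^{-1}) \to \infty$ for every $u$ in any infinite component. Feeding this sequence into Lemma \ref{lem:potential-in-lq} yields
\begin{equation}
\lim_{k\to\infty}\norm{d(a_k)} = \sqrt[q]{2}\,\norm{\phi}_q.
\end{equation}
But $\norm{d(a_k)} \le M$ for every $k$, so the left-hand side is at most $M$; hence $\norm{\phi}_q \le M/\sqrt[q]{2} < \infty$, i.e. $\phi \in \ell_q(G)$. This exhibits a potential of $d$ lying in $\ell_q$, which is precisely the assertion.

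The substance of the argument is entirely carried by the lemmas, so there is no genuine obstacle in the deduction itself; the only point demanding care is the normalisation step, where I must confirm that setting $\phi = 0$ on the finite components is legitimate (it is, since the character vanishes there, so adjusting $\phi$ by a constant on each finite component changes neither the induced character nor the derivation) and that the resulting potential has its whole $\ell_q$-mass concentrated on $\Gamma_{\op{inf}}$, matching the quantity computed in Lemma \ref{lem:potential-in-lq}.
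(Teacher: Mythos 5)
Your proof is correct and follows essentially the same route as the paper's: choose a potential stabilising to $0$ (Lemma \ref{lem:even-out}), take the sequence $\{a_k\}$ from Lemma \ref{lem:sequence-leading-to-infty}, apply Lemma \ref{lem:potential-in-lq}, and bound $\sqrt[q]{2}\,\norm{\phi}_q = \lim_k \norm{d(a_k)}$ by $\sup_{g\in G}\norm{d(g)}$. Your explicit normalisation of $\phi$ to vanish on the finite components is a detail the paper leaves implicit, and it is a worthwhile addition since Lemma \ref{lem:potential-in-lq} tacitly measures only the mass of $\phi$ concentrated on $\Gamma_{\op{inf}}$.
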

\begin{proof}
Any potential stabilises by lemma \ref{lem:even-out}. Choose such $\phi$ that stabilises to $0$. Consider a sequence $a_k$ from lemma \ref{lem:sequence-leading-to-infty}. By lemma \ref{lem:potential-in-lq} we get  
$$\norm{\phi}_q = \lim_{i\to \infty}\norm{d(a_{i})} \le \norm{d}_{el},$$
where  $\norm{d}_{el} = \sup_{g \in G} \norm{d(g)}.$
\end{proof}

These results allow us to understand what happens on infinite components. It turns out that if the sizes of finite components are uniformly bounded, then a similar result holds for them.

\begin{proposition}
    Let the diameters of all finite connected components of $\op{sk}_G$ be uniformly bounded by some constant $N$. Then a $G$-bounded derivation $d\colon f_p(G)\to \ell_q(G)$, whose support is contained in $\Gamma_{\op{f}}$, has a potential from $\ell_q.$
    \label{prop:potential-in-lq-finite-component}
\end{proposition}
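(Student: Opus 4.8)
The plan is to exploit that the support sits in finite components whose \emph{uniformly bounded diameter} forces a uniform bound on the number of vertices in each such component, and then to control the total $\ell_q$-mass of $\phi$ by the $\ell_q$-mass of its ``edge differences'' along $\op{sk}_G$, which $G$-boundedness makes globally summable. As a first step I would record the combinatorial consequence of the hypothesis: since every vertex of $\op{sk}_G$ has at most $2|\X|$ incident edges (one for each label in $\X \cup \X^{-1}$), a component of diameter $\le N$ is contained in a ball of radius $N$ about any of its vertices and hence has at most $M := \sum_{k=0}^N (2|\X|)^k$ elements. Thus \emph{every} finite component has at most $M$ vertices, uniformly.

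The second step is to bound the edge differences of $\phi$ directly by $d$ on the generators. For $x \in \X \cup \X^{-1}$ the edge from $g$ to $xgx^{-1}$ carries the potential difference $\phi(xgx^{-1}) - \phi(g) = \chi(xg,x) = \delta_{xg}(d(x))$. Therefore, as $g$ ranges over $G$,
\[
\sum_{g \in G} \bigl|\phi(xgx^{-1}) - \phi(g)\bigr|^q \;=\; \norm{d(x)}_q^q \;\le\; \norm{d}_{el}^q,
\qquad \norm{d}_{el} := \sup_{g \in G}\norm{d(g)} < \infty,
\]
by $G$-boundedness. Summing over the (finitely many) labels $x \in \X \cup \X^{-1}$ shows that the total $q$-th power of all edge differences in $\op{sk}_G$ is at most $2|\X|\,\norm{d}_{el}^q < \infty$.

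The third step is a telescoping argument on each finite component $C$. I would fix an origin $u \in C$ and adjust the component-wise constant so that $\phi(u) = 0$ (this alters neither $d$ nor the values of $\phi$ on other components, so the normalizations on all components are independent and compatible with $\phi \equiv 0$ on infinite ones). Choosing a BFS spanning tree of $C$ rooted at $u$, every vertex $v \in C$ is joined to $u$ by a tree path of length $\le N$, so $\phi(v)$ is the telescoping sum of the $\le N$ edge differences along that path. The power-mean inequality $\bigl(\sum_{k=1}^N a_k\bigr)^q \le N^{q-1}\sum_{k=1}^N a_k^q$ then gives $|\phi(v)|^q \le N^{q-1}\sum_{e \in \mathrm{path}(v)} |\Delta\phi(e)|^q$. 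Summing over $v \in C$ and noting that each tree edge lies on the root-path of at most $|C| \le M$ vertices yields $\sum_{v \in C} |\phi(v)|^q \le N^{q-1}M \sum_{e \in C}|\Delta\phi(e)|^q$. Summing this over all finite components and invoking the global edge bound of the second step gives
\[
\norm{\phi}_q^q \;=\; \sum_{g \in \Gamma_{\op{f}}} |\phi(g)|^q \;\le\; N^{q-1} M \cdot 2|\X|\,\norm{d}_{el}^q \;<\; \infty,
\]
so $\phi \in \ell_q$, as required.

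The main obstacle is conceptual rather than computational: unlike the infinite-component case of Lemma~\ref{lem:potential-in-lq}, conjugation cannot push elements of a finite conjugacy class off to infinity—it merely permutes each class within itself—so the disjointness-and-doubling estimate used there is simply unavailable here. The crux is therefore to convert the uniform diameter bound into genuine $\ell_q$-summability, which is accomplished by observing that $G$-boundedness already forces the edge differences to be globally $\ell_q$-summable, and by transferring this summability from edges to vertices through the spanning-tree telescoping and the bounded-multiplicity counting of tree edges.
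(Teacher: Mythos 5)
Your proof is correct, but it takes a genuinely different route from the paper's. The paper's own argument avoids telescoping entirely: it normalizes $\phi$ to vanish at an origin $u_i$ of each finite component, passes to the enlarged (still finite) generating set $\mc{A}$ consisting of all words of length at most $N$, and observes that every $g \in \Gamma_{\op{f}}$ is a \emph{single} conjugate $g = a^{-1}u_i a$ with $a \in \mc{A}$ and $u_i$ the origin of its component; since $\phi(a^{-1}u_i a) = \phi(a^{-1}u_i a) - \phi(u_i)$ is then a coefficient of $d(a^{-1})$, and distinct origins give distinct group elements, summing over origins and over $a\in\mc{A}$ yields $\sum_{g\in\Gamma_{\op{f}}}|\phi(g)|^{q} \le \sum_{a\in\mc{A}}\norm{d(a)}_q^{q} \le |\mc{A}|\,\norm{d}_{el}^{q}$. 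You replace this one-step conjugation by a walk along generator-labeled edges, using the identity $\phi(xgx^{-1})-\phi(g)=\delta_{xg}(d(x))$, the power-mean inequality, and a bounded-multiplicity count in a spanning tree, which additionally requires the uniform cardinality bound $M$ on components (correctly derived from the diameter bound and bounded degree). What the paper's trick buys is brevity: no spanning trees, no H\"older-type inequality, no counting of how many root-paths pass through an edge. What your version buys is a formally weaker hypothesis: you only use $\norm{d(x)}_q<\infty$ for the finitely many original generators $x\in\X\cup\X^{-1}$, rather than the bound $\norm{d(a)}\le\norm{d}_{el}$ for all words $a$ of length up to $N$ that the paper invokes (both of course follow from $G$-boundedness). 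The two proofs share the same underlying mechanism --- potential differences across conjugation are coefficients of images of $d$, summed injectively and bounded via the finite generating data --- and the same normalization of $\phi$ to zero at component origins and on the infinite components.
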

\begin{proof}
In each finite component, pick a vertex $u_i$ and set the potential  at that vertex to zero. Consider a new generating system for the group $G$, containing all words of length at most $N$. Denote this system by $\mc{A}$.

Then for each $g \in \Gamma_{\op{f}}$, there exists a vertex $u_i$ (a vertex of the component containing $g$) and a generating element $a$ such that $g = a^{-1} u_i a.$

Therefore,

$$ \sum_{g \in \Gamma_{\op{f}}} |\phi(g)|^{q} \le \sum_{a\in \mc{A}} \sum_{u_i \in \Gamma_{\op{f}}} |\phi(a^{-1} u_i a )|^q \le \sum_{a \in \mc{A}} \norm{d(a)} \le |\mc{A}|\norm{d}_{el}. $$

So the potential belongs to $\ell_q.$
\end{proof}

\begin{theorem}
\label{th:main}
    Let $G$ be a BC-group with uniformly bounded finite components. Then every derivation $G$-bounded derivation $d\colon f_p(G)\to \ell_q(G)$ is inner.
\end{theorem}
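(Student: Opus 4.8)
The plan is to split an arbitrary $G$-bounded derivation $d$ into a piece supported on the infinite components and a piece supported on the finite ones, and then invoke Corollary \ref{cor:potential-in-lq-infinite-components} and Proposition \ref{prop:potential-in-lq-finite-component} separately. Before splitting I must know that $d$ admits a potential at all, i.e.\ that $d$ is quasi-inner. This follows from $G$-boundedness together with the inequality $\supnorm{x}\le\norm{x}_q$: for a loop $(u,v)$ (so $uv=vu$) the $n$-fold composite is $(v^{n-1}u,v^{n})$ with assigned value $n\,\chi(u,v)=\delta_{v^{n-1}u}(d(v^{n}))$, whence $n\,|\chi(u,v)|\le\norm{d(v^{n})}_q\le\sup_{g}\norm{d(g)}_q<\infty$ for all $n$, forcing $\chi(u,v)=0$. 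Thus a potential $\phi$ of $d$ exists.

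Next I would carry out the decomposition. The two objects $g^{-1}h$ and $hg^{-1}$ joined by a morphism $(h,g)$ are conjugate (via $g$), hence lie in one and the same connected component; consequently the restrictions $\chi_{\op{inf}}$ and $\chi_{\op{f}}$ of $\chi$ to the morphisms whose component is infinite, respectively finite, are themselves characters (composable morphisms share an object and therefore a component), and they induce derivations $d_{\op{inf}}$ and $d_{\op{f}}$ with $d=d_{\op{inf}}+d_{\op{f}}$. Since $d_{\op{inf}}(g)=\sum_{h:\,hg^{-1}\in\Gamma_{\op{inf}}}\chi(h,g)\,h$ is merely the restriction of $d(g)$ to part of its coordinates, we get $\norm{d_{\op{inf}}(g)}_q\le\norm{d(g)}_q$, and likewise for $d_{\op{f}}$; hence both summands are $G$-bounded, with supports in $\Gamma_{\op{inf}}$ and $\Gamma_{\op{f}}$ respectively.

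Now I would apply the two established results. Corollary \ref{cor:potential-in-lq-infinite-components} yields a potential $\psi_1\in\ell_q$ for $d_{\op{inf}}$, and Proposition \ref{prop:potential-in-lq-finite-component}, whose hypothesis is precisely the uniform bound on the diameters of the finite components, yields a potential $\psi_2\in\ell_q$ for $d_{\op{f}}$. Then $\psi=\psi_1+\psi_2$ induces $\chi_{\op{inf}}+\chi_{\op{f}}=\chi$, so it is a potential of $d$, and $\norm{\psi}_q\le\norm{\psi_1}_q+\norm{\psi_2}_q<\infty$. Putting $a=\sum_{t\in G}\psi(t)\,t\in\ell_q(G)$, formula \eqref{eq:formal-commutator} gives $d(g)=[a,g]=D_a(g)$, so $d$ is inner.

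All of the analytic work is already contained in the preceding lemmas, so the only delicate point is the splitting: one must verify that restricting $\chi$ to the morphisms of the infinite (or finite) components again gives a character --- which is exactly where one uses that a morphism keeps its two endpoints in a single component --- and that this coordinatewise restriction cannot increase the $\ell_q$ norm, so that $G$-boundedness is passed on to each summand.
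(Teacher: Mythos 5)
Your proof is correct and follows essentially the same route as the paper: decompose $d = d_{\op{inf}} + d_{\op{f}}$ according to component type, observe that each summand inherits $G$-boundedness because its coordinates form a subset of those of $d(g)$, and then apply Corollary \ref{cor:potential-in-lq-infinite-components} and Proposition \ref{prop:potential-in-lq-finite-component} to obtain two $\ell_q$ potentials whose sum is an $\ell_q$ potential of $d$. The only difference is that you explicitly justify two points the paper leaves implicit --- the existence of a potential (quasi-innerness of a $G$-bounded derivation, via the loop argument $n\,|\chi(u,v)| \le \sup_g \norm{d(g)}_q$) and the fact that restricting the character by component again yields characters, hence well-defined derivations --- which is a tightening of the same argument rather than a different approach.
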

\begin{proof}
Let's show that for such $d$, we can find a potential from $\ell_q.$ The derivation $d$ can be represented as the sum $d = d_{\text{inf}} + d_{\op{f}}.$ The support of $d_{\text{inf}}$ is contained in the infinite components, and the support of $d_{\op{f}}$ is contained in the finite components. Note that for every $g\in G$, it holds $\norm{d_{\text{inf}}(g)}\le \norm{d(g)} \le \norm{d}_{el},$ similarly for the second term. This means that $d_{\op{f}}$ and $d_{\text{inf}}$ are $G$-bounded, and therefore, by Proposition \ref{prop:potential-in-lq-finite-component} and Corollary \ref{cor:potential-in-lq-infinite-components} their potentials lie in $\ell_q.$ Thus, we obtain
\begin{equation}
    \sum_{g \in G} |\phi(g)|^{q} = \sum_{g \in \Gamma_{\text{inf}}} |\phi(g)|^q + \sum_{g \in \Gamma_{\op{f}}} |\phi(g)|^q = \sum_{g \in G} |\phi_{\text{inf}}(g)|^q + \sum_{g \in G} |\phi_{\op{f}}(g)|^q < \infty. 
\end{equation}

Thus, we have shown $\phi$ lies in $\ell_q.$
\end{proof}

\begin{example}
Nilpotent groups of rank 2 are BC-groups with uniformly bounded finite components.
\end{example}

Recall that a group $G$ is called a nilpotent group of rank 2 if its factor group modulo the center $Z$ is commutative.

Choose a presentation of $G$ of the following form $\langle a_1, a_2, \dots a_m, b_{ij} \dots| R \rangle,$ where $b_{ij} = [a_i^{-1},a_j^{-1}]$. Additionally, assume that if $a_i$ is a generator, then $a_i^{-1}$ is also a generator, i.e., there exists $j$ such that $a_j = {a_i}^{-1}.$ We consider the group to be finitely generated, as always. We denote the center of the group by $Z,$ and the subgroup generated by the commutators of generators by $B = \langle b_{ij} \rangle \subset Z.$

\begin{proposition}
    In a nilpotent group of rank 2, the bounded connected components are uniformly bounded by some constant $N.$
\end{proposition}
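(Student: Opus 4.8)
The plan is to reduce the diameter of a bounded component to the order of a single fixed finite group, namely the torsion subgroup of $B$. First I would invoke the description already recorded in the nilpotent-of-rank-$2$ example: for $u\in G$ the component $\op{sk}_{[u]}$ is isomorphic to the Cayley graph of $G/Z(u)$ (\cite{Aru20-en}, Lemma 4), where $Z(u)=C_G(u)$ is the centralizer. Since $G$ is nilpotent of class $2$, every commutator is central, so the commutator is bilinear: $[g_1g_2,u]=[g_1,u][g_2,u]$. Hence $g\mapsto[g,u]$ is a homomorphism $G\to[G,G]$ whose kernel is $C_G(u)$, and this identifies $G/C_G(u)$ with the subgroup $K_u:=\{[g,u]:g\in G\}=\langle[a_i,u]\rangle\subseteq[G,G]$. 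Under this identification, conjugating a vertex by a generator $a_i$ translates the corresponding element of $K_u$ by $[a_i,u]$, so $\op{sk}_{[u]}$ is exactly the Cayley graph of the abelian group $K_u$ with respect to the generating set $\{[a_i,u]^{\pm1}\}$. In particular the component is bounded, equivalently finite (all balls of $\op{sk}_G$ are finite), precisely when $K_u$ is finite.

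Next I would use that $[G,G]=B$. In class $2$ the commutator subgroup is generated by the $[a_i,a_j]$, and bilinearity gives $[a_i,a_j]=[a_i^{-1},a_j^{-1}]=b_{ij}$, so $[G,G]=\langle b_{ij}\rangle=B$. As the $b_{ij}$ are finitely many central elements, $B$ is a finitely generated abelian group, and therefore its torsion subgroup $T(B)$ is finite. The crucial observation is that every element of a finite subgroup of a finitely generated abelian group has finite order, so any finite subgroup is contained in $T(B)$. Applying this to $K_u\subseteq B$ in the case of a bounded component yields $K_u\subseteq T(B)$, whence $|K_u|\le|T(B)|$.

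Finally I would conclude with a trivial graph-theoretic bound: a connected graph on at most $|T(B)|$ vertices has diameter at most $|T(B)|-1$. Since the bounded component $\op{sk}_{[u]}$ has vertex set in bijection with $K_u$, its diameter is at most $N:=|T(B)|-1$, a constant independent of $u$.

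The conceptual content—and the only place where anything nontrivial happens—is the passage from ``$K_u$ finite'' to ``$K_u$ lies in the fixed finite group $T(B)$''; the rest is bookkeeping with the bilinearity of the commutator and the standard identification of a conjugacy component with a Cayley graph. I expect the only care needed to be in verifying $[G,G]=B$ and the homomorphism property of $g\mapsto[g,u]$, both routine consequences of nilpotency of class $2$, and in matching the commutator convention with the orientation of the conjugation edges of $\op{sk}_G$.
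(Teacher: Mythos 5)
Your proof is correct and follows essentially the same route as the paper's: both arguments rest on the observation that the conjugates of $u$ differ from $u$ by elements of a subgroup of the finitely generated abelian group $G'=B$, which, being finite for a bounded component, must lie inside the finite torsion subgroup $\op{Tor}(G')$, yielding the uniform bound $N$. Your explicit use of the homomorphism $g\mapsto[g,u]$ (with kernel $C_G(u)$ and image $K_u$) in fact supplies the justification the paper leaves implicit when it asserts $gu_0g^{-1}=u_0b$ with $b\in\op{Tor}(G')$.
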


\begin{proof}
The group $G' = [G,G]$ is finitely generated. Indeed, for a nilpotent group of rank 2, it holds that $a_i a_j = a_j a_i a_i^{-1} a_j^{-1} a_i a_j = a_j a_i [a_i^{-1}, a_j^{-1}] = a_j a_i b_{ij},$ where $b_{ij}\in Z.$ Consequently, the commutator of any two elements is equal to the product of elements $b_{ij},$ and there is a finite number of $b_{ij}$.

In a finitely generated abelian group, the torsion subgroup is finite. Consider a finite component $\op{sk}_{u_0}$. Notice that $g u_0 g^{-1} = u_0 b,$ where $b\in \op{Tor}(G').$ Therefore, the number of elements in each component is bounded by $N = |\op{Tor}(G')|.$
\end{proof}

\begin{corollary}
In a nilpotent group of rank 2, every $G$-bounded ${d \colon f_p(G) \to \ell_q(G)}$, has a potential from $\ell_q,$ so $d$ is inner.
\end{corollary}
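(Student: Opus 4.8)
The plan is to recognise this corollary as a direct specialisation of Theorem \ref{th:main}, so that the actual work reduces to checking that a nilpotent group of rank $2$ satisfies the two hypotheses of that theorem — being a BC-group and having uniformly bounded finite components — and then invoking it. Both ingredients are already available from the preceding material, and the proof is essentially an assembly of them.

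First I would recall that every nilpotent group of rank $2$ is a BC-group, a fact established earlier: for such $G$ the graph $\op{sk}_u$ is isomorphic to the Cayley graph of the quotient $G/Z(u)$, which is abelian, so the BC condition holds with the constant $C = 1$. This handles the behaviour on all components uniformly and in particular makes Lemma \ref{lem:even-out} (stabilisation of the potential) and Corollary \ref{cor:potential-in-lq-infinite-components} (potential in $\ell_q$ on the infinite part) available.

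Next I would invoke the Proposition just proved, which bounds the cardinality of every finite component by $N = |\op{Tor}(G')|$. Since a connected graph on at most $N$ vertices has diameter at most $N-1$, the diameters of all finite components are uniformly bounded by $N-1$, which is precisely the hypothesis required by Proposition \ref{prop:potential-in-lq-finite-component} and hence by Theorem \ref{th:main}. This is the only genuinely non-formal step, and even it is elementary: the passage from bounded cardinality to bounded diameter.

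With both hypotheses verified, Theorem \ref{th:main} applies verbatim. Writing $d = d_{\text{inf}} + d_{\op{f}}$ with supports in the infinite and finite components respectively, each summand is $G$-bounded, so by Corollary \ref{cor:potential-in-lq-infinite-components} and Proposition \ref{prop:potential-in-lq-finite-component} the corresponding pieces of the potential $\phi$ lie in $\ell_q$; summing, $\phi \in \ell_q$, and by formula \eqref{eq:formal-commutator} the derivation $d$ is inner. I do not expect any real obstacle here: all the analytic content has already been discharged in Lemmas \ref{lem:even-out}, \ref{lem:potential-in-lq-in-one-component}, \ref{lem:potential-in-lq} and the structural Proposition above, so the task is simply to confirm that a rank-$2$ nilpotent group literally meets the conditions of the main theorem and to cite it.
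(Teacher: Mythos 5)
Your proposal is correct and follows exactly the route the paper intends: the corollary is an immediate specialisation of Theorem \ref{th:main}, with the BC property supplied by the earlier example (via $\op{sk}_u \cong$ Cayley graph of the abelian quotient) and the uniform bound on finite components supplied by the preceding Proposition. Your extra remark that bounded cardinality ($N = |\op{Tor}(G')|$) forces bounded diameter ($\le N-1$) is a worthwhile clarification, since the Proposition's proof bounds the number of elements while the theorem's hypothesis is stated in terms of diameters, but it does not change the substance of the argument.
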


\appendix

\section{Unbounded inner derivation}
\label{appendix:example-unbounded}

Consider the following matrices 
\begin{equation}
    A_p = \begin{pmatrix} 1 & 1& 0 \\ 0 & 1 & 0 \\ 0 & 0 & 1  \end{pmatrix},\quad A_x  = \begin{pmatrix} 1 & 0& 0 \\ 0 & 1 & 1 \\ 0 & 0 & 1  \end{pmatrix}, \quad A_1 = \begin{pmatrix} 1 & 0& 1 \\ 0 & 1 & 0 \\ 0 & 0 & 1  \end{pmatrix}.
\end{equation}

A matrix with elements $(a, b, c)$ can be represented as the product $A_x^b A_p^a A_1^c$. Taking into account the relations, we have $H_3(\Z) = \langle A_x, A_p, A_1 \mid [A_p, A_x] = A_1, [A_p, A_1] = E, [A_x, A_1] = E \rangle.$ Let's consider the connected component containing $A_x^b A_p^a A_1^c.$ Evidently $a$ and $b$ are invariant under conjugation and $c$ can change to values multiples of $a$ and $b.$ The distance between elements is determined only by the difference in $c$, so it does not change during conjugation.

The property of derivation $d$ to be $G$-bounded is weaker than to be continuous. So unbounded inner derivations can exist.

Indeed, consider the Heisenberg group $G = H_3(\Z).$ We will examine derivations of the form $d\colon \C[G]_2 \to \C[G]_2.$


Let's take $h = A_p$ and define a derivation in such a way that there is a non-zero coefficient before $h$ in $d(A_x^k)$. To achieve this, define the potential as $\frac{1}{k}$ on the vertices $A_pA_x^{-k} = A_x^{-k}A_pA_1^{-k}$ for $k \ge 1$ and set it to zero on the remaining vertices.

Derivation $d$ defined by $\phi$ is obviously inner and $G$-bounded since $\phi$ lies in $\ell_2.$ Now let us calculate the images of some elements.

\begin{align}
    d(A_x) &= \sum_{t\in G} \left(\phi(A_x t A_x^{-1})  - \phi(t)\right)A_x t = \sum_{t \in G} \phi(t)tA_x - \sum_{t\in G}\phi(t)A_xt = \nonumber \\
    &=\sum_{k=1}^\infty \frac{1}{k}\left(   A_pA_x^{-k+1} - A_xA_pA_x^{-k} \right) = \sum_{k=1}^\infty \frac{1}{k}\left(   A_x^{-k+1}A_pA_1^{-k+1} - A_x^{-k+1}A_pA_1^{-k} \right)\\
    d(A_x^m) &=  \sum_{k=1}^\infty \frac{1}{k}\left(   A_x^{-k+m}A_pA_1^{-k+m} - A_x^{-k+m}A_pA_1^{-k} \right))
\end{align}

Consider the elements $a_m = \sum_{k = -m}^{m} A_x^k.$ The coefficient in $d(a_m)$ before $A_x^{-n}A_pA_1^{-n}$ is:

\begin{equation}
    \delta_{A_x^{-n}A_pA_1^{-n}}(d(a_m)) = \sum_{k = -m}^m \delta_{A_x^{-n}A_pA_1^{-n}}(d(A_x^k)) = \sum_{k =\max\{ -n+1,-m\},\,k\neq 0}^m \frac{1}{k+n} = \sum_{j=1,\,j\neq n}^{m+n}\frac{1}{j} \ge \sum_{j=2}^m \frac{1}{j},
\end{equation}
the last equality holds only when $n \le m + 1.$

Now we can estimate the norm of the image:

\begin{align*}
    \norm{d(a_m)} \ge \left( \sum_{n = 0}^m |\delta_{A_x^{-n}A_pA_1^{-n}}(d(a_m))|^2  \right)^\frac{1}{2} \ge \sqrt{m} \left(\sum_{j=2}^m \frac{1}{j} \right),
\end{align*}

where $\norm{a_m} = \sqrt{2m+1}.$ Thus, $\frac{\norm{d(a_m)}}{\norm{a_m}} \to \infty,$ so the operator is unbounded.

\newpage

\printbibliography[
heading=bibintoc,
title={References}
]

\end{document}